\newtheorem{theorem}{Theorem}[section]
\newtheorem{lemma}[theorem]{Lemma}
\theoremstyle{definition}
\newtheorem{definition}[theorem]{Definition}
\newtheorem{corollary}[theorem]{Corollary}
\theoremstyle{remark}
\newtheorem{remark}[theorem]{Remark}
\numberwithin{equation}{section}
\begin{document}
 \title{Thin gordian Unlinks}
\author{Jos\'{e} Ayala}
\address{Universidad de Tarapacá, Casilla 7D, Iquique, Chile} 
\thanks{Supported by Fondecyt Grant \#11220579}
              \email{jayalhoff@gmail.com}

\subjclass[2010]{57K10, 53C49, 53C42, 57N35, 57N25}
\keywords{gordian knot, geometric knots, physical knots, unknots, tight knot, Dubins paths}

\baselineskip=20 true pt
\baselineskip=1.10\normalbaselineskip
\maketitle 
\begin{abstract} 
\baselineskip=20 true pt
\maketitle \baselineskip=1.10\normalbaselineskip
A gordian unlink is a finite number of unknots that are not topologically linked, each with prescribed length and thickness, and that cannot be disentangled into the trivial link by an isotopy preserving length and thickness throughout.

In this note, we provide the first examples of gordian unlinks. As a consequence, we identify the existence of isotopy classes of unknots that differ from those in classical knot theory. More generally, we present a one-parameter family of gordian unlinks with thickness ranging in $[1,2)$ and absolute curvature bounded by 1, concluding that thinner normal tubes lead to different rope geometries than those previously considered. Knots or links in the one-parameter model introduced here are called thin knots or links. When the thickness is equal to 2, we obtain the standard model for geometric knots, also called thick knots.

\end{abstract}

\section{Introduction} 

Informally, we may say that $n$ tangled unknots made out of flexible cord, none of them linked one to another, could be continuously deformed into $n$ untangled round circles, i.e. the trivial link of $n$ components. On the other hand, if the unknots have prescribed length and thickness, it is still an open problem to prove the existence of a tangled trivial link of $n$ components so that no isotopy preserving length and thickness can disentangle them into $n$ untangled round thick circles; we call these gordian unlinks. In this note, we provide a one-parameter family of these objects; see figures \ref{figfig5} and \ref{figfig6}.

M. Freedman, Z. He, and Z. Wang, while studying conformal invariance for the energies proposed by J. O'Hara \cite{ohara} suggested a candidate to be a gordian unknot \cite{freedman}. Later, Pieransky managed to numerically untangle the candidate given in \cite{freedman} by a computer program called SONO (shrink-on-no-overlaps) \cite{pieransky1, pieransky2}. 

The intuition behind calling a knot {\it thick} is due to (among others) characterisations given by R. Litherland, J. Simon, O. Durumeric, E. Rawdon in \cite{simon1}; Y. Diao, C. Ernst, E. J. Janse van Rensburg \cite{diao1} and the one by O. Gonzalez and J. Maddocks \cite{gonzalez}. A characterisation of thickness and equivalences between several characterisations of thickness have been provided by J. Cantarella, R. B. Kusner, and J. M. Sullivan \cite{cantarella 1}.

A central problem in geometric knot theory is the ropelength problem. This asks for the minimal ratio between the length of the core to the thickness over all the realisations in a knot or link type. There is a vast literature in non-classical knot theory; here we mention \cite{agol, diao1, hyde, durum, gonzalez, kaitrich, millet, simon1, ohara, heiko} to name just a few. To contextualise the difficulty of these problems, after decades of tremendous efforts, the only knot with exact known ropelength value is the unknot. For thick links, this situation is improved in \cite{cantarella 1}  by providing examples of families of chain links built from line segments and arcs of round circles.

 A. Coward and J. Hass proved the existence of a gordian link, that is, two unlinked thick knots that cannot be untangled by an isotopy that preserves length and thickness \cite{cowardhass}. This link corresponds to the connected sum of two trefoils split by an unknot. In addition, R. Kusner and W. Kusner exhibited a pair of links that are Gehring ropelength minimizers but are not isotopic while preserving geometric constraints \cite{wkusner}. More recently, J. Ayala and J. Hass, in Thick gordian Unlinks \cite{ayalahass}, addressed a similar question to the one studied here, but for knots with a thickness of 2. It is important to note that these are fundamentally different problems, as bounding the curvature while using thinner tubes leads to a geometric obstruction that is absent when the thickness is 2.

The standard formulation for the ropelength problem asks to minimise the length of a knot so that it remains of {\it unit thickness} (or at least 1) throughout. As we can see from ordinary experience, the large variety of shapes of wire ropes does not seem to have normalised thickness. In fact, the relation between the minimum bent radius of the core of a wire rope and its thickness strongly characterises the wire rope itself. As an experiment, take your headphone wire, bend it like a U-turn and push it through to see that there is a sort of turnbuckle at the pushed end; see figure \ref{figfigtubes}. Now, do the same with a thicker rope; most probably the end of this rope and the end of the headphones have different shapes. In this work, we restrict the conventional framework of geometric knot theory by considering the class of $C^1$-smooth and piecewise $C^2$ curves, included within the standard $C^{1,1}$-smooth curves (with the additional constraint on absolute curvature). $C^{1,1}$-smooth curves traditionally associated with well-established theories involving min-type Morse functions and the Kuhn–Tucker theorem on constrained optimisation, \cite{agol,cantarella 2}. Through these modifications, we aim to offer a more realistic model for physical knots, see figure \ref{figfigtubes}.

{ \begin{figure} 
 \begin{center}
\includegraphics[width=.6\textwidth,angle=0]{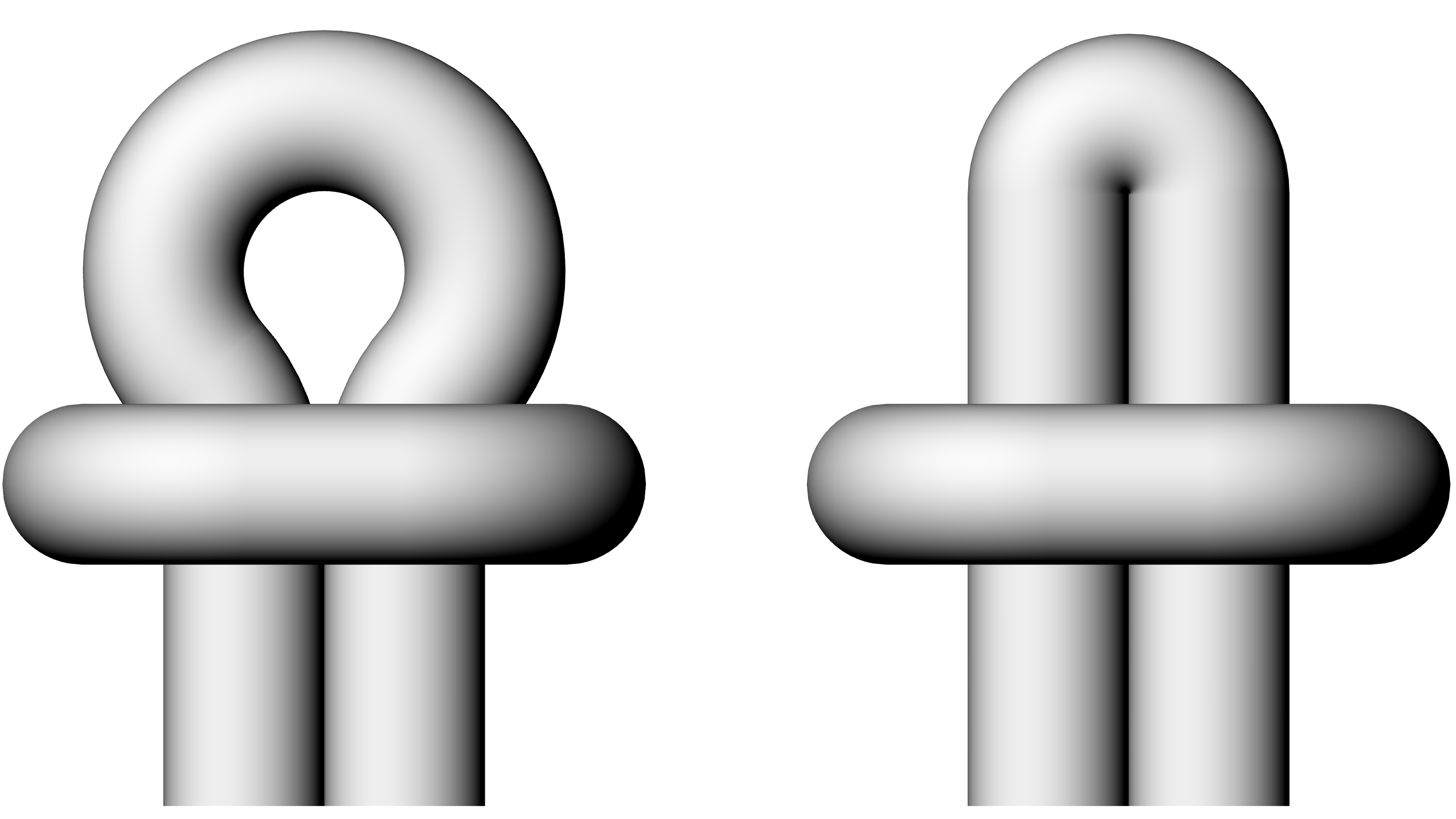}
\end{center}
\caption{Left: The end of a headphone wire is pushed with a tight float-like horizontal unknot to get stuck at some point. In this case, both the wire and the float have curvature bounded above by 1 and thickness 1. Right: This is the standard approach to geometric knots. Both the wire and the float have thickness 1, but different curvature bounded above by 2 and 1 respectively. In this case, the float can slide along the wire with no obstruction.}
\label{figfigtubes}
\end{figure}}

When minimising the length of a knot so that it remains of {\it unit thickness} (or at least 1) the normal map gets singular when the normals focus at the cap in figure \ref{figfigtubes} right. An important observation in this note is that {\it if the maximum radius of curvature does not match half the thickness of the rope, then there is no focusing of the normals at the ends of a rope}. This causes a defect for locking unknots. 

This observation led us to find minimisers in spaces of thin links (thickness ranging in [1, 2)) using results by Dubins and Sussmann, see \cite{dubins 1, sussman} together with results by the author and colleagues \cite{papera, papere, paperh} to conclude that thinner normal tubes lead to different rope geometry from the ones so far considered. Knots or links in the one-parameter model here introduced are called thin knots or links, and when thickness is equal to 2 we obtain the standard model considered for geometric knots, also called thick knots.

One of Hatcher's equivalences for the Smale conjecture states that the space of smooth unknotted loops in $\mathbb R^3$ deformation retracts onto the space of great circles in $S^2$ \cite{hatcher 1}. Recently, T. Brendle and A. Hatcher generalised this result for systems of unknots, none of them topologically linked one to the other \cite{hatcher 2}. They proved that the space of smooth links in $\mathbb R^3$ isotopic to the trivial link of $n$ components has the same homotopy type as the space consisting of configurations of $n$ unlinked unknots. The existence of gordian unlinks may indicate an obstruction for the existence of a version of the Brendle-Hatcher theorem for thin knots. 

In Section 2 we introduce basic notations, and show Lemma \ref{r1} which proves to be the foundation for the results coming in subsequent sections. In Section 3 we study the connected components in spaces of $\kappa$-constrained curves in $\mathbb R^3$. In Theorem \ref{kappacannotsing} we conclude that for sufficiently close initial and final points there is a connected component consisting exclusively of embedded curves. This leads to the existence of at least two connected components in spaces of $\kappa$-constrained curves, see Theorem \ref{ktrap}. In Section 4 we define separability for links and in Theorem \ref{exgords} we prove the existence of gordian unlinks. In Theorem \ref{mingords} we provide an explicit example of a gordian unlink. And, in Theorem \ref{ccccgordian} we show the existence of an infinite family of gordian unlinks.

\section{Geometric Obstructions}\label{lemmata}
We establish that a bound on curvature satisfied by the class of curves here studied leads to impediments to performing certain continuous deformations.

A curve $\gamma:[0,s]\to \mathbb R^3$ is said to be {\it in} $\mathcal S\subset \mathbb R^3$ if $\gamma(t) \in {\mathcal S}$ for all $t\in [0,s]$. Otherwise, the curve $\gamma$ is said to be {\it not in} $\mathcal S$. The interior, closure, boundary, diameter, and image of ${\mathcal S}$ under $\gamma$ are denoted by $int({\mathcal S})$, $cl({\mathcal S})$, $\partial{\mathcal S}$, ${diam}(\mathcal S)$, and $\gamma(\mathcal S)$ respectively. 

\begin{definition} \label{cbc} An embedded arc-length parameterised curve $\gamma: [0,s]\rightarrow {\mathbb R}^3$ is called {\it $\kappa$-constrained} if:
\begin{itemize}
\item $\gamma$ is $C^1$ and piecewise $C^2$
\item $||\gamma''(t)||\leq \kappa$, for all $t\in [0,s]$ when defined, $\kappa>0$ a constant.
\end{itemize}
 If $\gamma(0)=\gamma(s)$ then $\gamma$ is called a loop, otherwise $\gamma$ is called an arc. 
\end{definition}

Note that $\kappa$-constrained curves have absolute curvature bounded above almost everywhere by a positive constant with $1/\kappa$ corresponding to the minimum allowed radius of curvature. 

In spite of the generality carried by considering $C^{1,1}$ curves, we restrict ourselves to curves that are $C^1$ and piecewise $C^2$, these admit at most a finite number of points where the curvature is not defined. Note that all the known minimal ropelength links are indeed $C^1$ and piecewise $C^2$ \cite{cantarella 1}. In addition, the 2-dimensional counterpart of the ropelength problem, the ribbonlength problem, whose minimisers in spaces of immersed knots and link diagrams are $C^1$ and piecewise $C^2$ \cite{paperh}. 

Next, we prove that a $\kappa$-constrained arc in a 3-ball of radius $1/\kappa$ cannot intersect the boundary of the ball at an isolated point, see figure \ref{figspheres}. 

\begin{lemma}\label{3dntp} 
Suppose a $\kappa$-constrained arc $\gamma:[0,s]\to \mathbb R^3$ is defined in a radius $1/\kappa$ 3-ball $B$. Then, $\gamma([0,s]) \subset \partial B$ or $\gamma((0,s)) \cap \partial B=\emptyset$. 
\end{lemma}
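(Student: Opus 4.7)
The plan is to reduce the statement to a convexity argument about a single scalar function. Let $c$ denote the center of $B$ and consider $f:[0,s]\to\mathbb{R}$ given by $f(t)=\|\gamma(t)-c\|^2$. The hypothesis that $\gamma$ is in $B$ (of radius $1/\kappa$) is precisely $f(t)\leq 1/\kappa^2$, with equality iff $\gamma(t)\in\partial B$. The goal is therefore to prove that if $f$ attains the value $1/\kappa^2$ at an interior point, it must do so identically.

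First I would differentiate. Since $\gamma$ is $C^1$ and arc-length parameterised, $f'(t)=2\langle\gamma'(t),\gamma(t)-c\rangle$ is continuous on $[0,s]$, and wherever $\gamma$ is $C^2$,
\[
f''(t)=2\langle\gamma''(t),\gamma(t)-c\rangle+2\|\gamma'(t)\|^2=2\langle\gamma''(t),\gamma(t)-c\rangle+2.
\]
The key numerical input is the Cauchy-Schwarz inequality combined with the two bounds $\|\gamma''(t)\|\leq\kappa$ and $\|\gamma(t)-c\|\leq 1/\kappa$, which together yield $\langle\gamma''(t),\gamma(t)-c\rangle\geq -1$ and hence $f''(t)\geq 0$ wherever defined. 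Because $f'$ is continuous on $[0,s]$ and its derivative is non-negative off a finite set, $f'$ is non-decreasing, and so $f$ is convex on the whole interval. Note that this is exactly where the matching of the curvature bound $\kappa$ with the reciprocal $1/\kappa$ of the ball radius is used; any larger radius would weaken the Cauchy-Schwarz estimate and destroy convexity.

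The conclusion follows from the standard fact that a convex function on an interval cannot attain its global maximum strictly inside unless it is constant. Concretely, if $\gamma(t_0)\in\partial B$ for some $t_0\in(0,s)$, then $f(t_0)=1/\kappa^2=\max_{[0,s]} f$; writing $t_0=\lambda a+(1-\lambda)b$ for arbitrary $a\in[0,t_0)$, $b\in(t_0,s]$ and $\lambda\in(0,1)$, the convexity inequality $f(t_0)\leq\lambda f(a)+(1-\lambda)f(b)$ together with $f(a),f(b)\leq f(t_0)$ forces $f(a)=f(b)=1/\kappa^2$. Hence $f\equiv 1/\kappa^2$ on $[0,s]$, i.e. $\gamma([0,s])\subset\partial B$. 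The only real point of care is justifying convexity in the piecewise $C^2$ setting, but this is already handled by the continuity of $f'$ and the a.e. non-negativity of $f''$; no normal-tube machinery or indirect argument is required.
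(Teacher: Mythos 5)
Your proof is correct, and it takes a genuinely different route from the paper's. The paper argues by contradiction at a single putative isolated contact point $p=\gamma(t^*)\in\partial B$: it invokes the osculating circle $O_p$, notes it must be tangent to $T_p\partial B$, and runs a case analysis on its radius relative to $1/\kappa$ to derive either a curvature violation or an exit from $B$. That argument is local and requires second-order data at the contact point (problematic at the finitely many non-$C^2$ parameters) and, as written, only excludes \emph{isolated} contacts rather than establishing the full dichotomy in the statement. Your argument replaces this with a global maximum principle: the squared distance $f(t)=\|\gamma(t)-c\|^2$ satisfies $f''=2\langle\gamma'',\gamma-c\rangle+2\geq 0$ a.e.\ by Cauchy--Schwarz, precisely because the curvature bound $\kappa$ matches the reciprocal of the ball radius, so $f$ is convex and cannot attain its maximum value $1/\kappa^2$ at an interior parameter without being identically equal to it. This is more elementary (no osculating circles), is insensitive to the finitely many points where $\gamma''$ is undefined since $f'$ remains continuous, and handles isolated and non-isolated boundary contacts in one stroke, yielding the stated conclusion $\gamma([0,s])\subset\partial B$ directly. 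Both proofs hinge on the same geometric coincidence between the curvature bound and the ball radius; yours packages it as a differential inequality rather than a tangency obstruction, and I would regard it as the cleaner and more complete of the two.
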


\begin{proof} 
We prove that $\gamma$ while entirely defined in $B$ does not admit an isolated first order contact with $\partial B$. Suppose that an arc $\gamma$ defined in $B$ has an isolated contact point with $\partial B$ at $p=\gamma(t^*)$. Since $\gamma$ is of class $C^1$ this contact point is of first order. Therefore, the affine tangent plane $T_p \partial B$ is tangent to the osculating circle $O_p$. If $O_p \subset B$ it is proper, then the radius of $O_p$ is less than $1/\kappa$, implying that the curvature at $\gamma(t^*)$ is greater than $\kappa$, leading to a contradiction. If the radius of $O_p$ is $1/\kappa$, then by Definition \ref{cbc} it must be in $\partial B$. If the radius of $O_p$ is greater than $1/\kappa$, then $\gamma$ has a point near $p$ not in $B$, leading to a contradiction.
\end{proof}

\begin{definition}\label{leq2r1} 
 Let $B_1,B_2$ be radius $1/\kappa$ 3-balls, such that $\partial B_1\cap \partial B_2$ is a circle. Set
\begin{itemize}
\item  ${\mathcal I} ={int}( B_1\cap B_2)$
 \item  ${\mathcal U}=B_1\cup B_2$
\item ${\mathcal E}={ int}({\mathcal U}) \setminus { cl}({\mathcal I})$
\end{itemize}
\end{definition}


{ \begin{figure} 
 \begin{center}
\includegraphics[width=1\textwidth,angle=0]{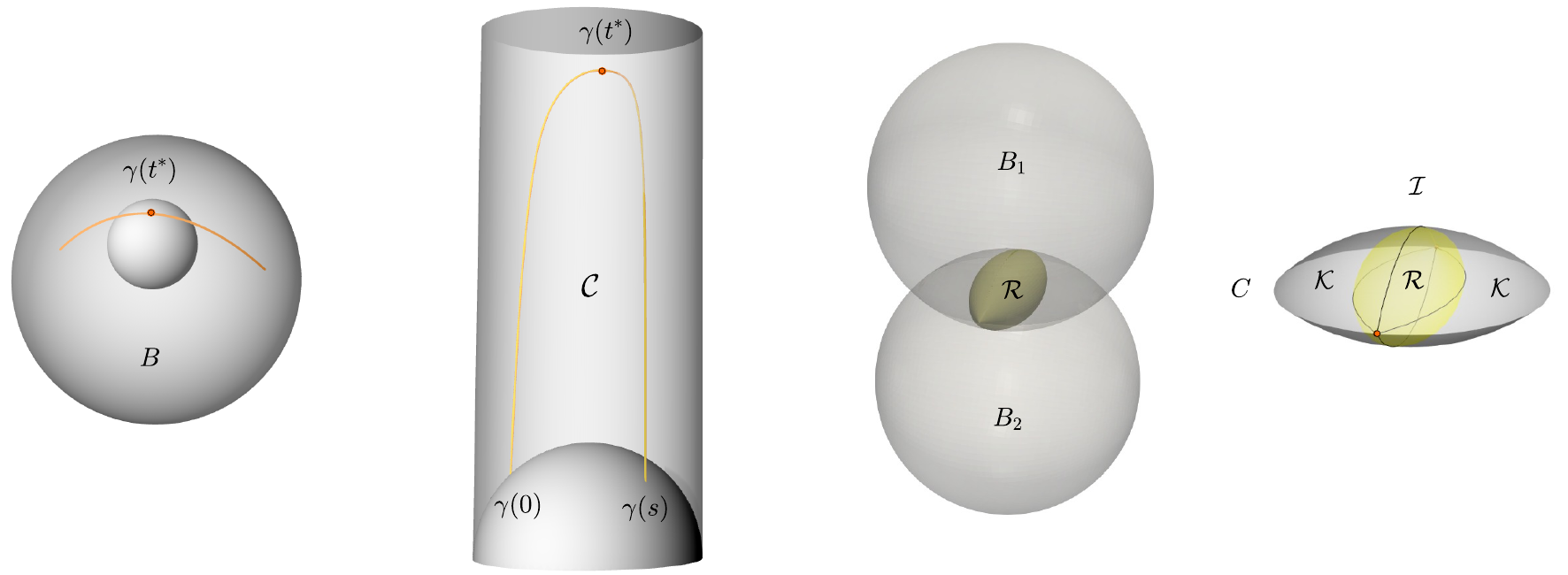}
\end{center}
\caption{From left to right. A first order contact of $\gamma$ with $\partial B$ leads to higher curvature. In this case, $\gamma$ cannot be entirely defined in $\mathcal C$. A generic intersection of two balls leading to $\mathcal R$. The intersection set $\mathcal I$ with $\mathcal K$ divided into two halves by $\mathcal R$.}
\label{figspheres}
\end{figure}}

\begin{lemma} \label{lemmae} 
 Let $S_1,S_2$ be radius $1/\kappa$ spheres, such that $S_1 \cap S_2=C$ is a circle. Then, a $\kappa$-constrained arc $\gamma:[0,s] \to \mathbb R^3$ with endpoints in $C$ such that $\gamma((0,s))\subset \mathcal E$ cannot exist.
\end{lemma}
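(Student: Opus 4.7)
The plan is to reduce $\gamma$ to lying in a single open crescent of $\mathcal E$ by a connectedness argument, then apply Lemma \ref{3dntp} to the containing ball to strengthen the reduction to strict interior containment, and finally derive a contradiction by continuously translating the ball $B_1$ toward $c_2$ until some interior point of $\gamma$ first touches the translated boundary.

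First I would show that $\mathcal E$ splits as the disjoint union of the two open crescents $\mathcal E_i=\mathrm{int}(B_i)\setminus B_j$ (with $\{i,j\}=\{1,2\}$): indeed $\mathrm{int}(\mathcal U)=\mathrm{int}(B_1)\cup\mathrm{int}(B_2)$ and $\mathrm{cl}(\mathcal I)=B_1\cap B_2$, so $\mathcal E=[\mathrm{int}(B_1)\cup\mathrm{int}(B_2)]\setminus(B_1\cap B_2)=\mathcal E_1\sqcup\mathcal E_2$. The linear function $p\mapsto|p-c_1|^2-|p-c_2|^2$ vanishes on the plane $\Pi$ through $C$ perpendicular to $c_1c_2$ and takes opposite signs on $\mathcal E_1$ and $\mathcal E_2$, so $\Pi$ separates them. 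Connectedness of $\gamma((0,s))$ then places it in a single crescent; without loss of generality, $\gamma((0,s))\subset\mathcal E_1$. Then $\gamma\subset B_1$ with $\gamma(0),\gamma(s)\in C\subset\partial B_1$; since $\gamma((0,s))\subset\mathcal E_1\subset\mathrm{int}(B_1)$ rules out the alternative $\gamma\subset\partial B_1$, Lemma \ref{3dntp} gives $\gamma((0,s))\cap\partial B_1=\emptyset$. Hence $|\gamma(t)-c_1|<1/\kappa$ and $|\gamma(t)-c_2|>1/\kappa$ strictly for all $t\in(0,s)$.

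For the main step, put $\hat n=(c_1-c_2)/|c_1-c_2|$, $L=|c_1-c_2|$, $c_1^\epsilon=c_1-\epsilon\hat n$, and let $B_1^\epsilon$ be the radius-$1/\kappa$ ball centred at $c_1^\epsilon$. Define $F(\epsilon)=\max_{t\in[0,s]}|\gamma(t)-c_1^\epsilon|^2$; this is continuous in $\epsilon$ by joint continuity on a compact domain. The endpoint identity $|\gamma(0)-c_1^\epsilon|^2=|\gamma(s)-c_1^\epsilon|^2=1/\kappa^2+\epsilon(\epsilon-L)$ follows from $\gamma(0),\gamma(s)\in C\subset\Pi$ and is strictly less than $1/\kappa^2$ for every $\epsilon\in(0,L)$. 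Moreover $B_1^L=B_2$, and since some interior $\gamma(t)$ satisfies $|\gamma(t)-c_2|>1/\kappa$, one has $F(L)>1/\kappa^2$; a localised computation near the endpoints, combining the endpoint defect $\epsilon(\epsilon-L)<0$ with the tangent condition $\gamma'(0)\cdot(\gamma(0)-c_1)\leq 0$ (and the analogous one at $s$), shows $F(\epsilon)<1/\kappa^2$ on a right-neighbourhood of $0$. Let $\epsilon^\star=\sup\{\epsilon\in[0,L]:F(\epsilon)\leq 1/\kappa^2\}$. Then $\epsilon^\star\in(0,L)$, $F(\epsilon^\star)=1/\kappa^2$ by continuity, so $\gamma\subset B_1^{\epsilon^\star}$ and $\gamma(t^\star)\in\partial B_1^{\epsilon^\star}$ for some $t^\star$; the endpoint identity forces $t^\star\in(0,s)$ strictly. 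Applying Lemma \ref{3dntp} to $\gamma$ and $B_1^{\epsilon^\star}$ now delivers the contradiction: $\gamma$ lies in the radius-$1/\kappa$ ball $B_1^{\epsilon^\star}$ and has an interior first-order contact with its boundary, while the endpoints of $\gamma$ lie strictly inside, so neither alternative in the conclusion of Lemma \ref{3dntp} can hold.

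The main obstacle will be verifying cleanly that $F(\epsilon)<1/\kappa^2$ on a right-neighbourhood of $0$, because the slack $1/\kappa^2-|\gamma(t)-c_1|^2$ vanishes as $t\to 0$ or $t\to s$. I expect this to follow from a careful joint expansion of $|\gamma(t)-c_1^\epsilon|^2$ in $t$ and $\epsilon$ near the endpoints, using the endpoint tangent inequality to ensure the $t$-linear correction is non-positive and the endpoint defect $\epsilon(\epsilon-L)<0$ to dominate any positive quadratic terms, combined with a uniform slack bound on compact subsets of $(0,s)$.
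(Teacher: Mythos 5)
Your argument is correct, but it takes a genuinely different route from the paper. The paper also reduces to one crescent (stated as a ``without loss of generality''), but then takes the point of $\gamma$ of maximal height along the axis through the centres and places a radius-$1/\kappa$ sphere $S^*$ tangent from below at that apex, invoking Lemma \ref{3dntp} to forbid the resulting first-order contact. Your proof instead sweeps a one-parameter family of congruent balls $B_1^{\epsilon}$ from $B_1$ to $B_2$ and locates the critical parameter $\epsilon^\star$ of first interior contact. The trade-off: the paper's argument is shorter, but it applies Lemma \ref{3dntp} to a sphere $S^*$ for which the containment hypothesis $\gamma([0,s])\subset B^*$ is not verified (the curve lies below the tangent plane at the apex, not obviously inside the tangent ball); your construction manufactures exactly that hypothesis, since $F(\epsilon^\star)=1/\kappa^2$ means $\gamma$ lies in the closed ball $B_1^{\epsilon^\star}$ while touching its boundary at an interior parameter, so the dichotomy of Lemma \ref{3dntp} applies verbatim and both alternatives fail. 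The price is the endpoint bookkeeping, but the step you flag as the main obstacle is easier than you fear: no Taylor expansion or tangent inequality is needed, since arc-length parametrisation gives $\lvert(\gamma(t)-\gamma(0))\cdot\hat n\rvert\le t$, hence
\[
\lvert\gamma(t)-c_1^{\epsilon}\rvert^2\le \tfrac{1}{\kappa^2}+\epsilon(\epsilon-L+2t)<\tfrac{1}{\kappa^2}
\]
for $t\le L/8$ and $0<\epsilon<L/2$ (and symmetrically near $t=s$), while on the complementary compact middle segment the slack $\tfrac{1}{\kappa^2}-\lvert\gamma(t)-c_1\rvert^2$ is bounded below by a positive constant, so a sufficiently small $\epsilon$ works there too. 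With that, every step of your proposal closes.
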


\begin{proof}
Suppose such a curve exists. Let $S_i=\partial B_i$ where $B_i$ is a 3-ball, $i=1,2$, both intersecting at a circle $C$. Without loss of generality, suppose that $\gamma((0,s))\subset B_1$ having endpoints at $C$. Let $o$, the centre of $C$, be the origin, and let $h: \gamma([0,s])\to  \mathbb R$ be the projection map of $\gamma$ onto the $z$-axis. Since $\gamma([0,s])$ it is compact and $h$ continuous, then $h$ attains a maximum. Since $\gamma$ is of class $C^1$, the highest point $p=\gamma(t^*)$ attained by the curve has a contact of first order with a radius $1/\kappa$ sphere $S^*$ which is located below the affine tangent plane $T_p S^*$. By Lemma \ref{3dntp} such a contact point cannot exist, leading to a contradiction. 
\end{proof}

The following result underlines the existence of a geometric obstruction for continuous deformations of arcs. This obstruction depends on curvature, the distance between (fixed) endpoints, and leads to the existence of at least two distinct connected components in the space $\kappa$-constrained arcs in $\mathbb R^3$ connecting the endpoints, compare Theorem \ref{ktrap}. Informally, for endpoints distant apart less than $2/\kappa$ the line segment joining $\gamma(0)$ to $\gamma(s)$ and another arc joining the same endpoints but having a point above $B_2$ (or below $B_1$) are in distinct homotopy classes in the space of $\kappa$-constrained arcs in $\mathbb R^3$ connecting these endpoints. These ideas will be clarified in the next section. 

\begin{lemma}\label{r1}  
(Geometric obstruction). A $\kappa$-constrained arc $\gamma: [0,s]\to{\mathcal C}$ such that:
$${\mathcal C}=\{(x,y,z)\in{\mathbb R}^3\,|\, x^2+y^2<1/\kappa^2,\,z\geq 0 \}$$
is an open cylinder, cannot satisfy both:
\begin{enumerate}
\item $\gamma(0),\gamma(s)$ are points on the $xy$-plane.
\item If $S$ is a radius $1/\kappa$ sphere with centre on the negative $z$-axis, and $\gamma(0),\gamma(s)\in S$. Then, some point in the image of $\gamma$ lies above $S$.
\end{enumerate}
In addition, if $\gamma$ satisfies {\rm (2)} then its diameter is at least 2r.
\end{lemma}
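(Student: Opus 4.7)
The plan is to assume $\gamma$ satisfies both (1) and (2) and derive a contradiction via Lemma~\ref{lemmae}. Set $r = 1/\kappa$. By (2), $S$ is a sphere of radius $r$ centred at $(0,0,-a)$ for some $a \in (0, r]$ with $\gamma(0), \gamma(s) \in S$; (1) then forces both endpoints onto the circle $C = S \cap \{z = 0\} = \{x^2 + y^2 = r^2 - a^2,\ z = 0\}$. I introduce the mirror sphere $S'$ of radius $r$ centred at $(0,0,+a)$, so that $S \cap S' = C$; with $B_1, B_2$ the closed balls bounded by $S', S$ we are in the framework of Definition~\ref{leq2r1}. The crucial geometric observation is that $S'$ is inscribed in $\mathcal C$, tangent to $\partial\mathcal C$ only along the equator $\{z = a,\ x^2 + y^2 = r^2\}$, which the \emph{open} cylinder $\mathcal C$ excludes.

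The key technical step is to show $\gamma \subset B_1$. Suppose for contradiction that $\gamma$ has a point strictly outside $B_1$; let $\alpha$ be a maximal subarc of $\gamma$ lying in the closure of the complement of $B_1$, so that both endpoints of $\alpha$ lie on $S' = \partial B_1$, and by openness of $\mathcal C$ neither lies on the excluded equator $z = a$. The subarc $\alpha$ is $\kappa$-constrained with endpoints on the radius-$r$ sphere $S'$; applying Lemma~\ref{3dntp} to suitable radius-$r$ balls tangent to $S'$ at each endpoint of $\alpha$, and using the cylindrical constraint, one forces $\alpha$ to lie on $S'$ itself. But any arc of $S'$ running from a non-equatorial point back to $S'$ inside $\mathcal C$ must cross the forbidden equator $\{z = a,\ x^2 + y^2 = r^2\}$, contradicting $\gamma \subset \mathcal C$. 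Hence $\gamma \subset B_1$. Condition (2) now supplies a point of $\gamma$ strictly outside $B_2$; the maximal subinterval on which $\gamma$ is outside $B_2$ produces a subarc with endpoints on $\partial B_1 \cap \partial B_2 = C$ and interior contained in $\mathrm{int}(B_1) \setminus \mathrm{cl}(B_2) \subset \mathcal E$, so Lemma~\ref{lemmae} yields the desired contradiction.

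For the diameter estimate $\mathrm{diam}(\gamma) \geq 2r$ under (2), I would analyze a highest point $p^* = \gamma(t^*)$: the tangent there is horizontal, and the curvature vector has nonpositive vertical component with magnitude at most $\kappa$. The cylindrical constraint forbids a horizontal osculating circle at $p^*$ (such a circle would have radius $\geq r$ and so would not fit in the open cylinder of radius $r$), so the osculating plane at $p^*$ must be vertical. The resulting radius-$\geq r$ osculating circle lies in a vertical plane through $p^*$ and realises a chord of length at least $2r$ between antipodal points at the height of its horizontal equator, yielding the bound. The main obstacle in the plan is the containment step $\gamma \subset B_1$: although the geometric picture is clear, rigorously pinning $\alpha$ to $S'$ requires careful application of Lemma~\ref{3dntp} to each $C^2$-piece of $\gamma$ and delicate analysis near the excluded equator where $S'$ touches $\partial\mathcal C$.
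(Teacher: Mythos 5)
Your reduction to Lemma~\ref{lemmae} via the mirror sphere $S'$ is a natural idea, but the argument has genuine gaps at each of its load-bearing steps. First, the containment $\gamma\subset B_1$ is precisely where the interaction between the curvature bound and the cylinder has to be exploited, and your sketch does not establish it: Lemma~\ref{3dntp} governs an arc defined \emph{inside} a radius-$1/\kappa$ ball, whereas your maximal subarc $\alpha$ lies in the closure of the \emph{complement} of $B_1$, so applying Lemma~\ref{3dntp} to balls tangent to $S'$ at the endpoints of $\alpha$ controls nothing away from those endpoints, and nothing in the sketch prevents $\alpha$ from simply climbing the cylinder far above $B_1$. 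This sub-claim is essentially equivalent in difficulty to the lemma itself, so deferring it leaves the proof circular. Second, even granting $\gamma\subset B_1$, the maximal subarc of $\gamma$ outside $B_2$ has its endpoints on $\partial B_2\cap B_1$, which is the spherical cap of $S$ bounded by $C$, not the circle $C$ itself; Lemma~\ref{lemmae} requires endpoints \emph{on} $C$, so it does not apply as you invoke it. Third, the diameter estimate does not follow from your osculating-circle argument: the osculating circle has only pointwise second-order contact with $\gamma$ at the highest point, so the fact that \emph{the circle} contains an antipodal pair at distance $\geq 2/\kappa$ says nothing about the diameter of $\gamma$, and ruling out a horizontal osculating plane does not force it to be vertical.

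For comparison, the paper localises the whole argument at the top of the curve: it takes the maximum-height point $p=\gamma(t^*)$, chooses a horizontal plane $z=z_0$ just below $h(p)$ meeting $\gamma$ in two points, observes that these are at distance less than $2/\kappa$ because they lie in the open radius-$1/\kappa$ cylinder, forms the circle in that plane having them as antipodes together with the two radius-$1/\kappa$ spheres through it, and applies Lemma~\ref{lemmae} to the short subarc above $z_0$. This avoids any global containment in an inscribed ball, and the endpoints of the relevant subarc land on the required circle by construction. For the diameter claim the paper argues contrapositively: an arc satisfying (2) with diameter less than $2/\kappa$ would fit inside a radius-$1/\kappa$ cylinder over its endpoints and hence contradict the first part. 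To salvage your proof you would need to supply an honest argument for $\gamma\subset B_1$, reposition the spheres so that the escaping subarc has endpoints on their circle of intersection, and replace the osculating-circle step entirely.
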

\begin{proof} 
Suppose that exists a curve satisfying both items in the statement. Similarly to Lemma \ref{lemmae}, let $h: \gamma([0,s])\to  \mathbb R$ be the projection map of $\gamma$ onto the $z$-axis. The compactness of $\gamma([0,s])$ and the continuity of $h$ guarantee the existence of a maximum height $p=\gamma(t^*)$. By the continuity of $\gamma$, there exists $z_0<h(p)$ such that the plane $z=z_0$ intersects $\gamma$ in two points; these are distant apart less than $2/\kappa$. Let $C$ be the circle in the plane $z=z_0$ having these two points as antipodes. And, $S_1, S_2$ be two radius $1/\kappa$ spheres whose intersection corresponds to $C$. By  Lemma \ref{lemmae} this curve cannot exist, leading to a contradiction.

If an arc satisfying (2) has diameter less than $2/\kappa$. Then, its projection onto the $xy$-plane must be a subset of the disk $x^2+y^2<1/\kappa^2$ implying that the arc is in $\mathcal C$, leading to a contradiction.
\end{proof}

\section{Spaces of $\kappa$-constrained arcs} \label{arcs}

If an arc is continuously deformed under the parameter $p$, we reparametrise each of the deformed arcs by its arc length. In this fashion, $\gamma_p: [0,s_p]\rightarrow {\mathbb R}^3$ describes a deformed arc at parameter $p$, with $s_p$ corresponding to its arc-length. We abuse notation by referring to a homotopy of arcs as such a reparametrisation by guaranteeing that each arc in the homotopy is parametrised by arc length. The space of $\kappa$-constrained arcs connecting $x,y \in \mathbb R^3$ is considered with the $C^1$ metric, and it is denoted by $\Sigma(x,y)$. 

\begin{definition}  \label{hom_adm} Assume $\beta,\gamma \in \Sigma(x,y)$ are given. A {\it $\kappa$-constrained homotopy} between $\beta: [0,s_0] \rightarrow \mathbb R^2$ and $\gamma: [0,s_1] \rightarrow \mathbb R^2$ corresponds to a continuous one-parameter family of embedded paths $ {H}_t: [0,1] \rightarrow \Sigma(x,y)$ such that:
\begin{itemize}
\item ${H}_t(p): [0,s_p] \rightarrow \mathbb R^2$ for $t\in [0,s_p]$ is an element of $\Sigma(x,y)$ for all $p\in [0,1]$.
\item $ {H}_t(0)=\beta(t)$ for $t\in [0,s_0]$ and ${H}_t(1)=\gamma(t)$ for $t\in [0,s_1]$.
\end{itemize}
\end{definition}

\begin{remark} Bounded curvature homotopy between bounded curvature paths is an equivalence relation, which will be denoted by $\sim$.

A {\it homotopy class} in $ \Sigma(x,y)$ is an equivalence class in $ \Sigma(x,y)/\sim$.

 Such a {\it homotopy class} is a path component in $ \Sigma(x,y)$.

\end{remark}

\begin{definition}\label{leq2r3} 
 Let $B_1,B_2$ be radius $1/\kappa$ 3-balls, such that $\partial B_1\cap \partial B_2$ is a circle $C$.  
\begin{itemize}
\item Let $x,y$ be an antipodal points in $C$. Let $\mathcal R$ be the open region enclosed by the union of all the short arcs in the family of radius $r$ circles passing through $x$ and $y$. 
\item $\mathcal K=\mathcal I \setminus cl (\mathcal R)$
\end{itemize}
see figure \ref{figspheres}.
\end{definition}

\begin{theorem}\label{below} An arc $\gamma \in \Sigma(x,y)$ such that $0<||x-y||<2/\kappa$ and such that $\gamma((0,s))\subset \mathcal S$, where $\mathcal S$ is $\mathcal K,\, \mathcal E,\,\mbox{or}\,\, \mathcal E \cup \mathcal K$ cannot exist. 
\end{theorem}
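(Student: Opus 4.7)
The plan is to argue by contradiction; suppose such $\gamma$ exists. A connectedness observation reduces the three options for $\mathcal{S}$ to two base cases: the open set $\mathcal{E}$ splits into the two disjoint lobes $\mathrm{int}(B_{2}) \setminus B_{1}$ and $\mathrm{int}(B_{1}) \setminus B_{2}$, $\mathcal{K}$ is connected, and these three open pieces are separated inside $\mathrm{int}(\mathcal{U})$ by the surface $\partial \mathcal{I} \setminus C$. Since $\gamma((0, s))$ is connected and disjoint from this separating surface, it lies entirely in a single lobe of $\mathcal{E}$ or entirely in $\mathcal{K}$. The first possibility is ruled out at once by Lemma \ref{lemmae}: the endpoints $x, y$ lie on $C$ and the interior of $\gamma$ lies in $\mathcal{E}$, which is exactly the situation forbidden there.

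For the main case $\gamma((0, s)) \subset \mathcal{K}$, I set coordinates with segment $xy$ along the $X$-axis (midpoint at origin) and $C$ in the $XY$-plane, so that $B_{1}, B_{2}$ have centres $(0, 0, \mp d)$ with $d = \sqrt{1/\kappa^{2} - r_{C}^{2}}$ and $r_{C} = \|x-y\|/2$. By the intermediate value theorem applied to $t \mapsto X(\gamma(t))$, there exists $t_{q} \in (0, s)$ with $X(\gamma(t_{q})) = 0$. The cross-section $\mathcal{R} \cap \{X = 0\}$ is the closed disk of radius $1/\kappa - d$ in the $YZ$-plane, so $q := \gamma(t_{q}) \notin \mathrm{cl}(\mathcal{R})$ yields
\[
  \rho(q) \; := \; \sqrt{Y(q)^{2} + Z(q)^{2}} \; > \; \tfrac{1}{\kappa} - d .
\]
Rotating about the $X$-axis so that in the new frame $(X', Y', Z')$ the $Z'$-axis points toward the projection of $q$ onto the $YZ$-plane, one has $Z'(q) = \rho(q) > 1/\kappa - d$, while $x, y$ remain on the plane $\{Z' = 0\}$.

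Next, restrict $\gamma$ to the maximal sub-interval $[t_{1}, t_{2}] \ni t_{q}$ on which $Z' \geq 0$; the result is a $\kappa$-constrained sub-arc with endpoints on $\{Z' = 0\}$ attaining its maximum value $Z' = \rho(q)$ at $q$ in the interior. Translating the origin within the plane $\{Z' = 0\}$ onto the midpoint of $\gamma(t_{1})\gamma(t_{2})$, the new cylinder axis (still parallel to $Z'$) passes through this midpoint and the two endpoints become equidistant from it. Lemma \ref{r1} now applies, with the unique radius-$1/\kappa$ sphere $S$ centred on the negative $Z'$-axis through these endpoints; its top sits at $Z' = 1/\kappa - \sqrt{1/\kappa^{2} - \ell^{2}}$ with $\ell = \|\gamma(t_{1})-\gamma(t_{2})\|/2 \leq r_{C}$, which is at most $1/\kappa - d$ and hence strictly below $Z'(q)$. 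Thus $q$ lies above $S$, contradicting Lemma \ref{r1} and closing the case.

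The main technical obstacle is the verification that the sub-arc is contained in the open radius-$1/\kappa$ cylinder around the shifted $Z'$-axis, as required by the hypothesis of Lemma \ref{r1}. The inclusion has to be squeezed out of the bound $\mathrm{diam}(\mathcal{I}) = 2r_{C} < 2/\kappa$ and the convexity of $\mathcal{I}$, together with a judicious choice of the axis position along the perpendicular bisector of $\gamma(t_{1})\gamma(t_{2})$; in regimes where $r_{C}$ is close to $1/\kappa$ one may have to further subdivide the sub-arc so that the lateral excursion of each piece stays strictly below $1/\kappa$ before invoking Lemma \ref{r1}.
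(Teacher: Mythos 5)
Your strategy is essentially the one the paper uses: dispose of $\mathcal E$ by quoting Lemma \ref{lemmae}, reduce the mixed case by connectedness (the paper only says ``ruled similarly''; your separation of $\mathcal E$ and $\mathcal K$ by $\partial\mathcal I$ makes this precise), and derive the contradiction for $\mathcal K$ from Lemma \ref{r1} together with the bound $diam(\mathcal I)=\|x-y\|<2/\kappa$. Your construction of the sphere $S$ adapted to the point $q\in\gamma\cap\{X=0\}$ outside $cl(\mathcal R)$ is in fact more careful than the paper's, which simply asserts that an arc in $\mathcal K$ has a point above a sphere through one of the two short arcs $C^*$.

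The genuine gap is the one you flag yourself and do not close: you invoke the main clause of Lemma \ref{r1}, whose hypothesis is that the arc lies in the open radius-$1/\kappa$ cylinder $\mathcal C$, and this containment does not follow from $diam(\mathcal I)=2r_C<2/\kappa$ alone. A point of $cl(\mathcal I)$ can sit at horizontal distance up to $2r_C$ from the midpoint $m$ of $\gamma(t_1)\gamma(t_2)$, which exceeds $1/\kappa$ as soon as $r_C>1/(2\kappa)$; and the ``further subdivision'' you propose is not routine, since cutting the subarc changes its endpoints and hence the sphere $S$ and the height comparison with $q$. The paper's own proof avoids this entirely by using the final clause of Lemma \ref{r1}: an arc satisfying condition (2) --- endpoints on a radius-$1/\kappa$ sphere $S$ centred on the negative axis and some point above $S$ --- has diameter at least $2/\kappa$, with no cylinder hypothesis invoked. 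You have already verified condition (2) for the subarc $\gamma|_{[t_1,t_2]}$ and shown that it lies in $cl(\mathcal I)$ with $diam(cl(\mathcal I))=2r_C<2/\kappa$; quoting the diameter clause instead of the main clause yields the contradiction immediately and makes your final paragraph unnecessary. (A minor point: your parenthetical claim that $\mathcal K$ is connected is not needed and may well be false near the tips $x,y$; the argument only requires that $\gamma((0,s))$ lie in $\mathcal K$, not in a single component of it.)
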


\begin{proof} Suppose that such an arc $\gamma$ in $\mathcal S=\mathcal K$ exists. Let $C$ be the circle in $B_1\cap B_2$ and let $C^*$ be one of the two short arcs of a radius $1/\kappa$ circle passing through $x$ and $y$ lying in the disk bounded by $C$, see figure \ref{figspheres} right. Let $S$ be the radius $1/\kappa$ sphere including $C^*$. Since $\gamma((0,s))\subset \mathcal K$, then it has a point above $S$. By Lemma \ref{r1} we have that $diam(\gamma((0,s)))\geq 2/\kappa$. On the other hand, since $S_1 \cap S_2\neq \emptyset$ we have that $diam(\mathcal I)<2/\kappa$. Since, $\mathcal K\subset \mathcal I$, we obtain a contradiction. The case $\mathcal S=\mathcal E$ is ruled in Lemma \ref{lemmae}. And, the case $\mathcal S=\mathcal E \cup \mathcal K$ is ruled similarly as before.
\end{proof}

\begin{corollary}\label{trappedcbc} An open arc $\gamma:(0,s)\rightarrow cl (\mathcal R)$ does not admit a first-order contact point with $\partial{\mathcal R}$.
\end{corollary}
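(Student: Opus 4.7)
My plan is to reduce the statement to Lemma~\ref{3dntp} by producing, at the alleged contact point, a radius-$1/\kappa$ sphere tangent to $\partial\mathcal R$ whose enclosed closed ball locally contains $cl(\mathcal R)$.

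Suppose for contradiction there exists $t^{*}\in(0,s)$ such that $p=\gamma(t^{*})\in\partial\mathcal R$ and $\gamma$ is tangent to $\partial\mathcal R$ at $p$; we may further assume the contact is isolated, since otherwise $\gamma$ agrees on a subinterval with a profile arc of the boundary and the argument below collapses to the isolated case. The existence of a well-defined tangent plane to $\partial\mathcal R$ at $p$ forces $p\notin\{x,y\}$, the conical tips of $\partial\mathcal R$, so $p$ lies in the smooth locus of the boundary.

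By Definition~\ref{leq2r3}, the smooth boundary point $p$ lies on a unique short arc $A$ of a radius $1/\kappa$ circle through $x$ and $y$. Let $S$ be the radius $1/\kappa$ sphere containing this circle and $B$ the closed ball with $\partial B=S$. The geometric crux is the claim that in a neighborhood of $p$ one has $cl(\mathcal R)\subset B$, with $\partial B\cap cl(\mathcal R)$ locally equal to $A$; equivalently, $\partial\mathcal R$ is tangent to $S$ at $p$ and curves at least as sharply as $S$ in every tangent direction, strictly more so transverse to $T_pA$. This follows from the fact that $\partial\mathcal R$ is the surface of revolution about the axis through $x,y$ with profile arc $A$: one principal direction at $p$ is along $T_pA$ with principal curvature exactly $\kappa$, while the rotational principal curvature (computable via Meusnier's theorem) strictly exceeds $\kappa$, the parallel circle through $p$ having radius strictly less than $1/\kappa$ whenever $p\ne x,y$.

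With the claim, restrict $\gamma$ to a small open interval $I\ni t^{*}$ with $\gamma(I)\subset B$. Then $\gamma|_I$ is a $\kappa$-constrained arc in $B$ touching $\partial B$ at the interior parameter $t^{*}$, so Lemma~\ref{3dntp} forces $\gamma(I)\subset\partial B$; but then $\gamma(I)\subset\partial B\cap cl(\mathcal R)=A$ locally, meaning every point of $I$ is a first-order contact of $\gamma$ with $\partial\mathcal R$, contradicting the isolation of the contact at $p$. The main obstacle in this plan is the geometric claim that $\partial\mathcal R$ lies locally on the $B$-side of its tangent sphere $S$---a surface-of-revolution curvature comparison that is elementary but demands care. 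Once that is in place the reduction to Lemma~\ref{3dntp} is immediate.
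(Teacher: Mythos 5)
Your proof is correct and takes essentially the same route as the paper, whose one-line argument is precisely this reduction to Lemma \ref{3dntp} at the contact point; you have supplied the detail the paper leaves implicit, namely that the radius-$1/\kappa$ sphere through the meridian arc at $p$ bounds a ball containing $cl(\mathcal R)$ locally. The curvature comparison you flag as the main obstacle can be replaced by the cleaner global observation that $cl(\mathcal R)$, being the union of the short arcs of radius-$1/\kappa$ circles through $x$ and $y$, is contained in \emph{every} closed radius-$1/\kappa$ ball whose boundary sphere passes through $x$ and $y$, which makes the reduction to Lemma \ref{3dntp} immediate.
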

\begin{proof}  After supposing that $\gamma$ has a first-order contact point with $\partial{\mathcal R}$, the result follows immediately from Lemma \ref{3dntp}.
\end{proof}

\begin{theorem}\label{c} Any arc in $\partial{\mathcal R}$ is not $\kappa$-constrained homotopic to an arc not in $cl(\mathcal R)$. 
\end{theorem}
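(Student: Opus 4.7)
The plan is to argue by contradiction via a first-exit-time analysis, using Corollary \ref{trappedcbc} to force the critical-time arc onto $\partial\mathcal{R}$ and Lemma \ref{r1} to rule out the ensuing escape. Suppose $H_p$ is a $\kappa$-constrained homotopy with $H_0=\alpha\subset\partial\mathcal{R}$ and $H_1=\beta\not\subset cl(\mathcal{R})$. I would set $T=\{p\in[0,1]:H_p\subset cl(\mathcal{R})\}$. Because $p\mapsto H_p$ is $C^0$-continuous and $cl(\mathcal{R})$ is closed, $T$ is closed in $[0,1]$; it contains $0$ and misses $1$, so $p^*:=\sup T$ lies in $[0,1)$ and $H_{p^*}\subset cl(\mathcal{R})$.

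Next I would show $H_{p^*}$ lies entirely on $\partial\mathcal{R}$. Picking $p_n\downarrow p^*$ with $p_n\notin T$, each $H_{p_n}$ has an interior point $q_n\notin cl(\mathcal{R})$; compactness and $C^1$-convergence yield, after subsequence, $q_n\to q^*\in H_{p^*}\cap\partial\mathcal{R}$. A short tangent-vector check at the fixed endpoints $x,y$ rules out $q^*$ being an endpoint, so $q^*$ is an interior point of $H_{p^*}$. Every $C^1$ tangential contact of $H_{p^*}$ with $\partial\mathcal{R}$ is first-order, so by Corollary \ref{trappedcbc} this contact is not isolated. The set $\{t\in(0,s_{p^*}):H_{p^*}(t)\in\partial\mathcal{R}\}$ is then open (no isolated contacts) and closed (preimage of $\partial\mathcal{R}$), and being nonempty it must equal the whole interval, giving $H_{p^*}\subset\partial\mathcal{R}$.

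To close the argument I would exploit $C^1$-proximity of $H_p$ to $H_{p^*}$ for $p$ just above $p^*$. Such an $H_p$ contains a sub-interval $[t_1,t_2]\subset(0,s_p)$ with $H_p(t_i)\in\partial\mathcal{R}$ and $H_p((t_1,t_2))\subset\mathbb R^3\setminus cl(\mathcal{R})$. Since $\partial\mathcal{R}$ is locally traced by radius-$1/\kappa$ circles lying on radius-$1/\kappa$ spheres, I would pick a supporting sphere $S^*$ through $H_p(t_1)$ and $H_p(t_2)$ with center on the $\mathcal{R}$-side. After rotating coordinates so that $S^*$ has center on the negative $z$-axis and both $H_p(t_i)$ lie in the $xy$-plane, the exiting sub-arc sits inside the cylinder $\mathcal{C}$ of Lemma \ref{r1} with a point above $S^*$, contradicting Lemma \ref{r1}.

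The hard part is this last step: I must show, uniformly as $p\downarrow p^*$, that a single radius-$1/\kappa$ supporting sphere $S^*$ can be arranged to pass through both exit points and that the exiting sub-arc actually fits in the cylinder $\mathcal{C}$. Smoothness of the family of generating spheres handles this away from the singular circle $C$; near $C$ the argument must be adapted, possibly by invoking Lemma \ref{lemmae} in a neighborhood of $C$ in place of Lemma \ref{r1}. A secondary subtlety is the endpoint transversality used to push $q^*$ into the interior of $H_{p^*}$, which should follow from the fact that at $x,y$ the homotopy parameters inherit tangent directions close to those of $\alpha$.
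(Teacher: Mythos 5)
Your overall strategy matches the paper's: derive a contradiction by applying the geometric obstruction of Lemma \ref{r1} (equivalently Lemma \ref{lemmae}) to an arc of the homotopy that has just begun to leave $cl(\mathcal R)$. The paper does this in three lines directly at parameter $0$, since $H(0)$ already lies on $\partial\mathcal R$; your first-exit-time scaffolding locating $p^*$ is a reasonable way to organise the same idea. However, the decisive step --- the one you yourself flag as ``the hard part'' --- is genuinely incomplete, and the specific fix you propose does not work. The supporting radius-$1/\kappa$ spheres of the spindle $\partial\mathcal R$ are exactly the spheres centred at the centres of the generating circles, and each such sphere meets $\partial\mathcal R$ only along its own generating arc. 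Two exit points $H_p(t_1),H_p(t_2)$ will in general sit on different generators, so there is no single ``supporting sphere $S^*$ through both'' to rotate into the position of Lemma \ref{r1}, and no amount of uniformity as $p\downarrow p^*$ produces one. The mechanism that actually closes the argument is the one inside the proof of Lemma \ref{r1} itself: since $cl(\mathcal R)\subset \mathcal I$ and $diam(\mathcal I)<2/\kappa$, continuity gives $diam(H_p)<2/\kappa$ for $p$ near $p^*$; take the point of the escaping sub-arc at maximal height in a suitable direction, slice just below it to obtain two points of the sub-arc at distance less than $2/\kappa$, let $C$ be the circle having these as antipodes, and apply Lemma \ref{lemmae} to the two radius-$1/\kappa$ spheres through $C$. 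This requires no family of supporting spheres of $\partial\mathcal R$ at all, and it handles the region near the rim circle with no separate case.

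A secondary slip: in your open-and-closed argument, ``every contact is non-isolated'' does not imply that the contact set is open (a Cantor-type contact set is not ruled out by non-isolation alone). What you actually have is stronger and makes the open-closed argument unnecessary: at an interior contact point $q^*$ the arc $H_{p^*}$ is contained in $cl(\mathcal R)\subset B_{q^*}$, where $B_{q^*}$ is the radius-$1/\kappa$ ball whose boundary sphere contains the generating circle through $q^*$; it therefore touches $\partial B_{q^*}$ at an interior parameter, and Lemma \ref{3dntp} forces the entire arc onto $\partial B_{q^*}$, hence onto that generating arc and into $\partial\mathcal R$ in one step. The case where the contact accumulates at the cone points $x,y$ of the spindle still needs the separate treatment you allude to, since those points are singular for $\partial\mathcal R$ and the tangent directions in $\Sigma(x,y)$ are not fixed by Definition \ref{hom_adm}.
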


\begin{proof} Let $\beta \in \Sigma(x,y)$ be an arc in $\partial{\mathcal R}$ of length $l$. 
Suppose there exists a $\kappa$-constrained homotopy $ {H}: [0,1] \rightarrow \Sigma(x,y)$ such that $H(0)=\beta$ and $H(1)=\gamma$ where $\gamma$ is not it $cl({\mathcal R})$. Since homotopies are continuous, there exist $\epsilon,\epsilon^*>0$ small, and an arc $\eta$ with $\eta \neq \beta$ such that $H(\epsilon)=\eta$ being $\eta$ a curve of length $l+\epsilon^*$. Accordingly, consider an open radius $1/\kappa$ cylinder ${\mathcal C}$ whose base disk has centre $o$ as in Lemma \ref{lemmae}. By Lemma \ref{r1}, $\eta$ cannot exist, leading to a contradiction.
\end{proof}

\begin{theorem}\label{kappacannotsing} The set $cl({\mathcal R})$ encloses only embedded arcs. 
\end{theorem}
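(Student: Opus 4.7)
The plan is to argue by contradiction: suppose some curve $\gamma:[0,s]\to cl(\mathcal R)$ satisfying the differential conditions of Definition \ref{cbc} fails to be injective, so that $\gamma(a)=\gamma(b)$ for some $0\le a<b\le s$; then $\ell:=\gamma|_{[a,b]}$ is a closed loop lying entirely in $cl(\mathcal R)$. After placing the antipodal pair $x,y\in C$ on the $z$-axis at heights $\pm\tfrac12\|x-y\|$, $\mathcal R$ becomes a convex lens of revolution about the $z$-axis, and an elementary computation gives $\operatorname{diam}(cl(\mathcal R))=\|x-y\|<2/\kappa$; in particular $\operatorname{diam}(\ell)<2/\kappa$.

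The key geometric claim is then that no $\kappa$-constrained closed loop can have diameter strictly less than $2/\kappa$. To establish this I would pick a direction $v\in S^2$ for which the height function $h(t)=v\cdot\ell(t)$ attains its maximum at an interior point $t^{*}\in(a,b)$---such a $v$ exists whenever $\ell$ is not a single point, for example as an outward supporting normal at any extreme point of the convex hull of $\ell([a,b])$ distinct from $\gamma(a)$. By $C^1$-smoothness, $\ell'(t^{*})\perp v$. Taking a plane $P$ perpendicular to $v$ slightly below $\ell(t^{*})$, the continuity of $\ell$ together with the local maximum at $t^{*}$ produces two points $p_1,p_2\in P\cap\ell$ at mutual distance less than $2/\kappa$. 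Choosing two radius-$1/\kappa$ balls $B_1,B_2$ as in Definition \ref{leq2r1} with centres on the line through $(p_1+p_2)/2$ parallel to $v$, placed so that $p_1,p_2$ are antipodes on $C':=\partial B_1\cap\partial B_2\subset P$, the portion of $\ell$ strictly above $P$ becomes a $\kappa$-constrained arc with endpoints on $C'$ whose interior lies in the upper lobe of $\mathcal E$. Lemma \ref{lemmae} forbids such a sub-arc, yielding the required contradiction.

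The principal technical obstacle is ensuring that the piece of $\ell$ above $P$ genuinely sits in $\mathcal E$ rather than crossing into $cl(\mathcal I)$. This is handled by choosing $P$ close enough to the tangent plane at $\ell(t^{*})$ that the above-$P$ piece of $\ell$ lies in a small neighbourhood of that plane, which can be arranged to sit in the crescent $B_1\setminus cl(B_2)$ by an appropriate choice of $B_1,B_2$. Corollary \ref{trappedcbc} then rules out tangential contact of $\ell$ with $\partial\mathcal U$ along the way, and the rest of the argument is a direct unpacking of Lemma \ref{lemmae} combined with the diameter bound on $cl(\mathcal R)$.
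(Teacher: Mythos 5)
Your proposal is correct and follows essentially the same route as the paper: reduce a self-intersection $\gamma(a)=\gamma(b)$ to a closed $\kappa$-constrained sub-loop inside $cl(\mathcal R)$, and contradict the fact that such a loop must have diameter at least $2/\kappa$ while $\operatorname{diam}(cl(\mathcal R))<2/\kappa$. The only difference is that you re-derive the loop-diameter bound from Lemma \ref{lemmae} via the max-height/two-spheres construction (thereby reproving the addendum to Lemma \ref{r1}, including its delicate step of locating the sub-arc in $\mathcal E$), whereas the paper simply applies Lemma \ref{r1} to the restriction $\gamma|_{[a,b]}$; you also dispense with the paper's unnecessary framing in terms of a homotopy reaching a ``first'' self-intersecting arc.
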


\begin{proof} Suppose $cl({\mathcal R})$ encloses only embedded arcs. Let $\beta$ be embedded in $cl({\mathcal R})$. Let $ {H}: [0,1] \rightarrow \Sigma(x,y)$ be a $\kappa$-constrained homotopy such that $H(0)=\beta$ and $H(1)=\gamma^*$ are in $cl({\mathcal R})$ and admit a self-intersection. Since homotopies are continuous maps, there exists $t_0\in [0,1]$ such that ${ H}(t_0)=\gamma$ is the first arc in ${H}$ with a self-intersection. Let $\gamma(t_p)=\gamma(t_q)$ be such a self-intersection point, $t_p,t_q \in [0,s_\gamma]$. 

We apply Lemma \ref{r1} to the restriction $\gamma: [t_p,t_q] \to cl({\mathcal R})$ to conclude that the $diam(\gamma)\geq 2/\kappa$. On the other hand, $diam(cl({\mathcal R}))<2/\kappa$, so we conclude that such an arc is not in $cl({\mathcal R})$ leading to a contradiction.
\end{proof}

\begin{definition} \label{trapreg} A compact, connected, minimal with respect to set inclusion set $\mathcal S(x,y) \subset \mathbb R^3$ is said to be a {\it trapped} region if the arcs in $\Sigma(x,y)$ defined in $\mathcal S(x,y)$ are embedded and are not $\kappa$-constrained homotopic to arcs not in $\mathcal S(x,y)$.
\end{definition}

\begin{theorem}\label{ktrap} If $x,y\in {\mathbb R^3}$ are such that $0<||x-y||<2/\kappa$. Then, the space of arcs in $cl({\mathcal R})$ and the space of arcs not in $cl({\mathcal R})$ are in different homotopy classes  in $\Sigma(\mbox{\it x,y})$. In particular, $\mathcal S(x,y)=cl(\mathcal R)$ is a trapped region. 
 \end{theorem}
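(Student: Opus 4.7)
The plan is to prove Theorem \ref{ktrap} by contradiction, combining Theorem \ref{c} (which forbids an arc on $\partial\mathcal R$ from being homotoped outside $cl(\mathcal R)$) with Corollary \ref{trappedcbc} (which forbids an interior arc from making a first-order contact with $\partial\mathcal R$). Suppose for contradiction that there is a $\kappa$-constrained homotopy $H\colon [0,1]\to \Sigma(x,y)$ with $H(0)=\beta$ contained in $cl(\mathcal R)$ and $H(1)=\gamma$ not contained in $cl(\mathcal R)$. I would set
\[
t_0 \;=\; \sup\{\,t\in[0,1] : H(s)\subset cl(\mathcal R)\text{ for all } s\in[0,t]\,\}.
\]
Since $cl(\mathcal R)$ is closed in $\mathbb R^3$ and $H$ is continuous in the $C^1$ metric, the arc $H(t_0)$ itself lies in $cl(\mathcal R)$, while for every $\epsilon>0$ there exists some $s\in(t_0,t_0+\epsilon)$ with $H(s)\not\subset cl(\mathcal R)$.

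The next step is a case analysis on how $H(t_0)$ meets $\partial\mathcal R$. If $H(t_0)$ were contained in $int(\mathcal R)$, then $C^1$-continuity of $H$ together with compactness of the arc would force a whole neighbourhood of $t_0$ to stay in $cl(\mathcal R)$, contradicting the choice of $t_0$; hence $H(t_0)$ must touch $\partial\mathcal R$. If the entire image of $H(t_0)$ lies in $\partial\mathcal R$, Theorem \ref{c} denies the existence of a homotopy taking it to an arc outside $cl(\mathcal R)$. Otherwise the image of $H(t_0)$ meets $\partial\mathcal R$ at an interior point while still being contained in $cl(\mathcal R)$, which by Definition \ref{cbc} is a first-order contact with $\partial\mathcal R$; this contradicts Corollary \ref{trappedcbc}. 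In all scenarios we reach a contradiction, so no such homotopy exists and the two spaces of arcs represent distinct homotopy classes in $\Sigma(x,y)$.

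To conclude that $\mathcal S(x,y)=cl(\mathcal R)$ is a trapped region in the sense of Definition \ref{trapreg}, I would verify the three required properties in order: compactness and connectedness follow directly from the construction in Definition \ref{leq2r3}; Theorem \ref{kappacannotsing} guarantees that every arc in $cl(\mathcal R)$ is embedded; and the first part of the proof supplies the required non-existence of a $\kappa$-constrained homotopy to the outside. Minimality with respect to set inclusion is inherited from the fact that $\mathcal R$ is swept precisely by the family of short radius-$1/\kappa$ arcs through $x$ and $y$, so no proper closed connected subset containing $x$ and $y$ can host all such arcs.

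The main obstacle I anticipate is the delicate transfer of pointwise containment across the supremum: one must ensure that $C^1$-convergence $H(t)\to H(t_0)$ as $t\to t_0^-$ is strong enough to place $H(t_0)$ inside the closed set $cl(\mathcal R)$, and also that any contact of $H(t_0)$ with $\partial\mathcal R$ is necessarily of first order rather than some higher-order tangency along a subarc that could evade Corollary \ref{trappedcbc}. Both rely on the $C^1$ smoothness built into Definition \ref{cbc} together with the osculating-circle argument already used in Lemma \ref{3dntp}.
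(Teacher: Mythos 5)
Your proposal assembles exactly the three ingredients the paper itself cites (Corollary \ref{trappedcbc}, Theorem \ref{c}, Theorem \ref{kappacannotsing}); the paper's own proof is a one-line ``combine these and leave the details to the reader,'' so in spirit you are reconstructing the intended argument, and your supremum-time $t_0$ with a case analysis on how $H(t_0)$ meets $\partial\mathcal R$ is the natural way to do it.

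There is, however, one step that does not hold as you state it. In the case where $H(t_0)$ lies in $int(\mathcal R)$ away from its endpoints, you argue that ``$C^1$-continuity together with compactness of the arc forces a whole neighbourhood of $t_0$ to stay in $cl(\mathcal R)$.'' The standard compactness argument (a compact set inside an open set is at positive distance from the complement) does not apply here, because the endpoints $x$ and $y$ belong to $\partial\mathcal R$ --- they are the conical tips of the spindle $\mathcal R$ --- so the image of $H(t_0)$ is at distance zero from $\mathbb R^3\setminus\mathcal R$. A $C^1$-small perturbation fixing the endpoints but tilting the tangent direction at $x$ outside the cone of directions spanned by the short arcs can exit $cl(\mathcal R)$ arbitrarily close to $x$, and nothing in your argument rules this out. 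Closing this case requires a curvature argument near the tips rather than continuity alone: essentially one must invoke Lemma \ref{r1} (or the cylinder construction of Theorem \ref{c}) to show that a $\kappa$-constrained arc leaving $x$ and immediately escaping $cl(\mathcal R)$ must acquire a point above one of the bounding spheres and hence diameter at least $2/\kappa$, which is incompatible with being a small deformation of $H(t_0)$. Your minimality claim for the trapped region is likewise asserted rather than proved, but that matches the level of detail in the paper. The rest of the case analysis (interior tangency versus arc contained in $\partial\mathcal R$) is sound and aligned with the paper's intended use of Corollary \ref{trappedcbc} and Theorem \ref{c}.
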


\begin{proof} The result follows after combining Corollary \ref{trappedcbc}, Theorem \ref{c} and Theorem \ref{kappacannotsing}, we leave the details to the reader.
\end{proof}

%
%

Let $\mathcal V=int(\mathcal U)\cup\{x,y\}$. Observe that $\mathcal V$ is also a trapping region. In fact, both $\mathcal I$ and $\mathcal V$ contain the arcs in $cl({\mathcal R})$. However, neither $\mathcal V$ nor $\mathcal I$ is minimal with respect to set inclusion.

We conjecture that for $0<||x-y||<2/\kappa$ the arcs in $cl({\mathcal R})$ and the arcs not in $cl({\mathcal R})$ are the only two homotopy classes of arcs in $\Sigma(\mbox{\it x,y})$. On the other hand, if $||x-y||\geq 2/\kappa$ or, $x=y$ we conjecture that $\Sigma(\mbox{\it x,y})$ consists of a single homotopy class.

\section{Existence of gordian unlinks} 

We consider a knot to be the image of an embedding of a $\kappa$-constrained curve in $\mathbb R^3$ that is homeomorphic to $S^1$. A $\kappa$-constrained isotopy is a $\kappa$-constrained homotopy through embeddings. 

In the context of classical knot theory, the following definition is trivially satisfied by any pair of unlinked knots. We abuse notation and refer to $\gamma$ as the knot itself.

\begin{definition}\label{sep} The knots $\beta,\gamma$ are said to be separable if there exist $1$-constrained isotopies $H$ of $\beta$, and $J$ of $\gamma$ and arbitrarily large neighbourhoods $M$ of $H(1)$ and $N$ of $J(1)$ such that: $$M(H(1))\cap N(J(1))=\emptyset $$
If $\beta,\gamma$ are not separable, they are said to be gordian. 
\end{definition}

With Lemma \ref{r1} and Theorem \ref{ktrap} in mind, we present the following. 

 \begin{definition} \label{shortlong}Let $B$ be a radius $1/\kappa$ 3-ball centred at the $z$-axis. A short arc has its endpoints on $\partial B$ and it is defined entirely on $\partial B$, or it is defined in $\mathcal R \subset int(B)$ except at its endpoints. A long arc has its endpoints on $\partial B$ and has a point above $S$.
\end{definition}

Note that 
\begin{itemize}
\item a short arc satisfies (1) in Lemma \ref{r1} and
\item a long arc satisfies (2) in Lemma \ref{r1}.
\end{itemize}


 \begin{corollary} \label{nonsplit} 
Consider,
\begin{itemize}
\item $\gamma$ to be a circle of radius $r_0<1/\kappa$ centred at the origin and located in the $xy$-plane
\item $\beta$ to be a $\kappa$-constrained unknotted loop traversing the interior of the disk  bounded by $\gamma$ with a long arc defined above and below the $xy$-plane.
\end{itemize} 
Then, $\beta$ and $\gamma$ are not separable, figure \ref{figstadium} left. 
\end{corollary}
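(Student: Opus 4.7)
The plan is to derive a contradiction from the assumption that $\beta$ and $\gamma$ are separable. Suppose to the contrary that there exist $\kappa$-constrained isotopies $H$ and $J$ carrying $\beta$ and $\gamma$ into disjoint, arbitrarily large neighbourhoods; write $\beta_t = H(t)$ and $\gamma_t = J(t)$. Since each $\gamma_t$ is an embedded unknot, I would choose a continuous family of spanning disks $\{D_t\}_{t\in[0,1]}$ with $D_0$ equal to the planar disk bounded by $\gamma$. After a generic perturbation the intersections $\beta_t \cap D_t$ are transverse except at finitely many tangency times, and because the isotopy is through embeddings $\partial D_t = \gamma_t$ never meets $\beta_t$. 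Hence the cardinality $|\beta_t \cap D_t|$ changes only through tangent birth/death events; it equals $2$ at $t=0$ (the two crossing points in $D_\gamma$) and must equal $0$ at $t=1$ (the terminal configurations are in disjoint neighbourhoods), so there is a first time $t^* \in (0,1]$ at which the initial pair of intersection points is destroyed.

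Next, I would isolate the arc $\alpha_t \subset \beta_t$ that runs between the two initial intersection points on the ``above'' side of $D_t$, using a continuously chosen normal orientation; $\alpha_0$ is precisely the long arc of $\beta$ above the $xy$-plane. Applying Lemma~\ref{r1} to $\alpha_0$ (with $\mathcal{C}$ the upper cylinder of radius $1/\kappa$ cut off by the cap sphere $S_0$ tangent to $D_\gamma$) yields $\mathrm{diam}(\alpha_0) \geq 2/\kappa$. On the other hand, as $t \to t^*$ the two endpoints of $\alpha_t$ coalesce at the tangency point on $D_{t^*}$, forcing $\mathrm{diam}(\alpha_t) \to 0$. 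The contradiction will follow from showing that the diameter cannot drop below $2/\kappa$ at any $t \in [0, t^*)$.

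To argue that the long-arc property persists, I would attach to each $t < t^*$ an auxiliary radius $1/\kappa$ sphere $S_t$ passing through the two endpoints of $\alpha_t$ and sitting on the ``below'' side of $D_t$, chosen to depend continuously on $t$; at $t=0$ this is the cap sphere already used. The condition that $\alpha_t$ has a point strictly above $S_t$ is open and holds at $t=0$, so it holds on a maximal half-open interval $[0,\tau)$. If $\tau < t^*$, then at $t = \tau$ the arc $\alpha_\tau$ would admit an isolated first-order contact with $S_\tau$ while remaining in the closed ball bounded by $S_\tau$, and this is ruled out by Lemma~\ref{3dntp} (as used in Corollary~\ref{trappedcbc}). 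Hence $\tau = t^*$, Lemma~\ref{r1} applies uniformly on $[0,t^*)$, and $\mathrm{diam}(\alpha_t) \geq 2/\kappa$ throughout, contradicting $\mathrm{diam}(\alpha_t) \to 0$. The main obstacle I expect is the continuous construction of the spheres $S_t$ when $D_t$ is no longer planar and $\gamma_t$ is an arbitrary deformed unknot; this bookkeeping requires a local half-space model along the moving chord joining the two intersection points, together with a verification that radius $1/\kappa$ spheres tangent to $D_t$ along that chord can be selected in a $C^0$-continuous fashion. Everything else, including the intersection-counting argument, reduces to standard Morse-theoretic transversality for $\beta_t$ relative to $D_t$.
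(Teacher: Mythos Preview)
Your route is considerably more elaborate than the paper's and, as written, has real gaps.  The paper does \emph{not} let the disk move: it fixes once and for all the planar disk $D$ bounded by $\gamma$ and the open radius-$1/\kappa$ cylinder $\mathcal C$ over it (this is where the hypothesis $r_0<1/\kappa$ enters), and argues that any $\kappa$-constrained homotopy pulling $\beta$ off $D$ would at some instant force the upper long arc to sit inside $\mathcal C$ with endpoints on the $xy$-plane while still reaching above the cap sphere $S$, which is exactly the configuration Lemma~\ref{r1} forbids.  There are no moving spanning disks $D_t$, no auxiliary spheres $S_t$, and no Morse bookkeeping; the single fixed cylinder does all the work.

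Two steps in your argument do not go through.  First, the assertion that $\mathrm{diam}(\alpha_t)\to 0$ as $t\to t^*$ is unjustified: when the two transverse intersections with $D_t$ annihilate at a tangency, one of the two complementary sub-arcs of $\beta_{t}$ collapses, but you have not shown it is the ``above'' arc $\alpha_t$ rather than the ``below'' one.  Since both sides start out as long arcs, deciding which one can shrink is essentially the content of the corollary, so this step is circular.  Second, your persistence argument appeals to Lemma~\ref{3dntp} by claiming that at the borderline parameter $\tau$ the arc $\alpha_\tau$ lies in the closed ball bounded by $S_\tau$ and touches $\partial S_\tau$ tangentially.  But $\alpha_\tau$ is an a priori large arc in $\mathbb R^3$ whose only contact with $S_\tau$ is at its two endpoints; nothing forces it into that radius-$1/\kappa$ ball, so Lemma~\ref{3dntp} simply does not apply.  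The difficulty you single out at the end---constructing $S_t$ continuously along a non-planar $D_t$---is therefore not the main obstacle; it is a symptom of having left the fixed planar setting in which Lemma~\ref{r1} is stated, and the paper's proof avoids it by never leaving that setting.
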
 

\begin{proof} 
Let $D$ be the disk bounded by $\gamma$ in the $xy$-plane. If $\beta$ and $\gamma$ are separable, then there exists a $\kappa$-constrained homotopy $H$ such that $H(0)=\beta$ and $H(t^*)\cap D=\emptyset$ for some $t^*\in [0,1]$. Without loss of generality, suppose that the homotopy $H$ pulls $\beta$ so it passes through $int(D)$. 

Since $\beta$ admits a long arc above the $xy$-plane, it satisfies (2) in Lemma \ref{r1}. We claim that every loop in the family $H(t)$, $t\in [0,1]$ must have a point above the $xy$-plane. Otherwise, if $H(t)$ passes through $int(D)$, by continuity, there exists $\hat t \in [0,1]$ such that $H(\hat t)$ has a long arc above the $xy$-plane while at the same time is inside an open cylinder with base $D$, contradicting Lemma \ref{r1}. We conclude that $H(t)\cap D\neq\emptyset$ for all $t\in[0,1]$, and therefore such a homotopy cannot exist. Since the non-existence of a homotopy implies the non-existence of an isotopy, the result follows.
\end{proof}

%

\begin{corollary} \label{nonsplitknot} 
Suppose $\beta$ in Corollary \ref{nonsplit} is any knot. Then $\beta$ and $\gamma$ are not separable.   
\end{corollary}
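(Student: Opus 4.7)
The plan is to notice that the proof of Corollary \ref{nonsplit} never actually uses the unknottedness of $\beta$; it rests entirely on three geometric hypotheses that are carried over in the present statement, namely that $\beta$ is $\kappa$-constrained, that $\beta$ traverses the interior of the disk $D$ bounded by $\gamma$, and that $\beta$ has a long arc both above and below the $xy$-plane. Consequently the argument should transcribe almost verbatim, with the topological type of $\beta$ playing no role.

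More concretely, I would suppose for contradiction that $\beta$ and $\gamma$ are separable, so that there exists a $\kappa$-constrained isotopy $H$ with $H(0)=\beta$ whose endpoint $H(1)$ lies in a region disjoint from arbitrarily large neighbourhoods of $\gamma$; in particular $H(t^*)\cap D=\emptyset$ for some $t^*\in [0,1]$. As in the previous corollary, one may assume without loss of generality that the isotopy disengages $\beta$ from $\gamma$ by sliding it through $int(D)$. I would then repeat the reasoning: by continuity of $H$, at some intermediate moment $\hat t$ the loop $H(\hat t)$ would be forced into an open cylinder of radius $1/\kappa$ over $D$ while still retaining a long arc above the $xy$-plane, and Lemma \ref{r1} forbids precisely this configuration, yielding the desired contradiction.

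The step that deserves the most scrutiny is the claim that allowing $\beta$ to be knotted does not introduce any new avenue for separation. If $\beta$ is topologically linked with $\gamma$, even classical (unconstrained) separability fails and there is nothing to prove. If $\beta$ is topologically unlinked from $\gamma$ despite being knotted, the geometric obstruction applies exactly as in Corollary \ref{nonsplit}: the argument localises entirely to the radius-$1/\kappa$ cylinder over $D$ and to the long arc of $\beta$, depends only on Lemma \ref{r1}, and does not see the global embedding type of $\beta$. I therefore anticipate no real obstacle beyond a careful reproduction of the preceding proof, and indeed I would likely present the proof as a one-line appeal to the fact that Corollary \ref{nonsplit} made no use of the topological type of $\beta$.
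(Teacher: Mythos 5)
Your proposal matches the paper's proof, which is simply the observation that the argument of Corollary \ref{nonsplit} applies verbatim since it never uses the unknottedness of $\beta$. Your additional scrutiny of why knottedness introduces no new avenue for separation is a sensible elaboration, but the approach is the same.
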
 
\begin{proof} Identical to Corollary \ref{nonsplit}. 
\end{proof}

{ \begin{figure} 
 \begin{center}
\includegraphics[width=.8\textwidth,angle=0]{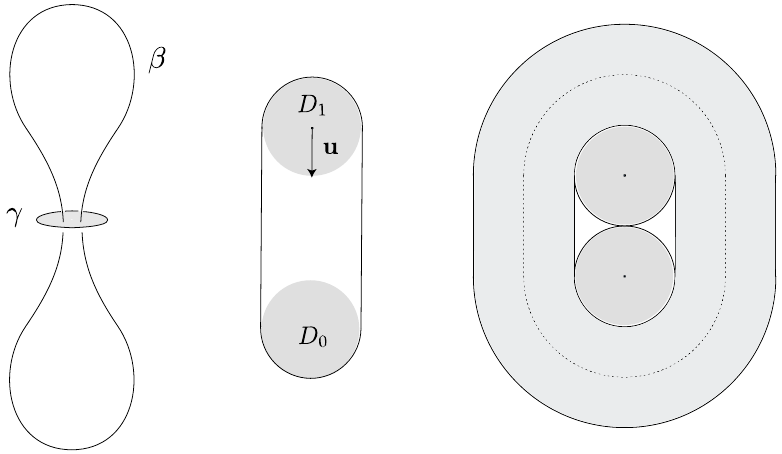}
\end{center}
\caption{Left: Examples of unknots as in Corollary \ref{nonsplit}. Centre: An illustration of the gradient descent method applied to disks. Right: The resulting minimal ribbon diagram in Lemma \ref{ribbon3d}.}
\label{figstadium}
\end{figure}}

\subsection{Curvature and thickness}
We adopt the notion of radius of thickness given in \cite{simon1} as our starting point, see also \cite{diao1}. Let $R_1(\gamma)=1/\max \kappa(s)$; this quantity corresponds to the minimum radius of curvature allowed by $\gamma$, so in our case $R_1(\gamma)=1/\kappa$. In addition, we say that $p,q \in \gamma$ are double critical points if the cord connecting them is orthogonal to the tangent vectors of $\gamma$ at $p$ and $q$. The minimum amongst the distances between all doubly critical points in $\gamma$ is denoted by $R_2(\gamma)$. 

\begin{theorem}(\cite{simon1}). \label{simon}
 The radius of thickness of a knot $\gamma$ is given by, $$\tau(\gamma) =  min\{R_1(\gamma),\frac{1}{2} R_2(\gamma) \}.$$
\end{theorem}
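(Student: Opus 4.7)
The plan is to prove the equality by establishing matching upper and lower bounds on $\tau(\gamma)$, interpreted as the supremum of radii $r$ such that the normal tube of radius $r$ about $\gamma$ is embedded.

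For the upper bound $\tau(\gamma) \leq R_1(\gamma)$, I would pick a point $p = \gamma(t_0)$ where the curvature attains its maximum $1/R_1(\gamma)$. The osculating circle at $p$ has radius $R_1(\gamma)$, and its centre $c$ lies on the principal normal. For any $r > R_1(\gamma)$, the normal disk of radius $r$ at $p$ contains $c$, and a Taylor expansion at $t_0$ shows that normal disks at nearby parameters also sweep across $c$, producing self-intersection. For the second upper bound $\tau(\gamma) \leq \tfrac{1}{2} R_2(\gamma)$, I would take a doubly critical pair $p,q$ realising the minimum distance, so that $\|p-q\| = R_2(\gamma)$. By definition of doubly critical, the chord $pq$ is orthogonal to the tangent vectors at both endpoints, so the midpoint of $pq$ lies in both normal disks of radius $r$ whenever $r > \tfrac{1}{2} R_2(\gamma)$, again violating embedding.

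For the lower bound $\tau(\gamma) \geq \min\{R_1(\gamma), \tfrac{1}{2} R_2(\gamma)\}$, I would argue by contradiction. Suppose the normal tube of radius $r$ fails to embed for some $r$ strictly less than the minimum. Then distinct normal disks at parameters $s,t$ intersect; by compactness of $\gamma$, choose such a pair $(s^*,t^*)$ minimising $\|\gamma(s)-\gamma(t)\|$. If $s^* = t^*$, the failure is local and a curvature estimate (essentially the same osculating-circle argument as above, run backwards) shows the radius of curvature at $\gamma(s^*)$ is at most $r$, contradicting $r < R_1(\gamma)$. If $s^* \neq t^*$, then applying first-order optimality to the squared distance $\|\gamma(s)-\gamma(t)\|^2$ yields
\[
\langle \gamma(s^*) - \gamma(t^*), \gamma'(s^*)\rangle = \langle \gamma(s^*) - \gamma(t^*), \gamma'(t^*)\rangle = 0,
\]
so $(\gamma(s^*), \gamma(t^*))$ is a doubly critical pair at distance at most $2r < R_2(\gamma)$, contradicting the definition of $R_2(\gamma)$.

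The main obstacle will be the finitely many points where $\gamma$ fails to be $C^2$, since the paper's setting is only $C^1$ and piecewise $C^2$. At such corners the osculating circle is undefined, so both the curvature bound and the notion of a doubly critical point must be reformulated using one-sided tangent or curvature vectors, and the variational arguments for the minimising pair $(s^*, t^*)$ must be patched to allow the optimum to occur at a junction. Once this is handled, the two bounds combine to give $\tau(\gamma) = \min\{R_1(\gamma), \tfrac{1}{2} R_2(\gamma)\}$.
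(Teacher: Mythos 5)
The paper does not actually prove this statement: it is quoted from Litherland--Simon--Durumeric--Rawdon \cite{simon1}, and the only original content surrounding it is the unproved remark that the $C^2$ result ``remains valid'' for $C^1$, piecewise $C^2$ knots. So there is no in-paper argument to compare against, and your proposal must be judged as a reconstruction of the cited proof. Your framework (interpret $\tau$ as the normal injectivity radius, prove two upper bounds and one lower bound) does match the structure of the argument in \cite{simon1}, and you correctly identify that the extra work in this paper's setting is at the finitely many non-$C^2$ parameters.

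The genuine gap is in the lower bound. You minimise $\|\gamma(s)-\gamma(t)\|$ over the set of pairs whose normal disks intersect and then ``apply first-order optimality'' to conclude that the chord at the minimiser is orthogonal to both tangents. But that set is a constraint set: the minimiser $(s^*,t^*)$ will typically lie on its boundary (the two disks barely touch), where no unconstrained first-order condition holds, and two normal disks of radius $r$ can perfectly well intersect while the chord between their centres is orthogonal to neither tangent vector. So the deduction that $(\gamma(s^*),\gamma(t^*))$ is doubly critical does not follow as stated. The proof in \cite{simon1} instead extracts a doubly critical pair by minimising the distance between two suitably chosen subarcs around $s^*$ and $t^*$, and it uses the curvature bound $r<R_1(\gamma)$ to rule out the minimum occurring at an endpoint of a subarc; that step requires a quantitative lemma of the form ``a curve of curvature at most $\kappa$ cannot quickly return to near its starting point'' (in the spirit of Lemma \ref{r1} of this paper), which your sketch does not supply. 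Your treatment of the diagonal case $s^*=t^*$ via the focal-point computation is standard and fine as a sketch; the missing piece is precisely the passage from ``two distinct normal disks of radius $r$ intersect'' to ``there exists a doubly critical pair at distance at most $2r$.''
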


The definition of radius of thickness was originally presented for $C^2$ knots, and it remains valid for knots that are $C^1$ piecewise $C^2$.

\begin{definition} The immersed open tubular neighbourhood of a $\kappa$-constrained knot or link with thickness ranging in $[1,2)$ and fixed length is called a thin knot or link. Two links are thin isotopic if they are $\kappa$-constrained isotopic while preserving the thickness and the length of each of its components. Knots or links that are isotopic but not thin isotopic are called gordian. \end{definition}

The class of equivalence of thin links arises from the notion of \emph{thin isotopy}, which imposes stricter constraints compared to the standard isotopy. A thin isotopy requires that the deformation occurs exclusively between $\kappa$-constrained links, ensuring that the curvature constraint is preserved throughout. In addition, the thickness of each component is the same and remains fixed within the range $[1,2)$, and the length of each component is also maintained.

Theorem \ref{ktrap} can be viewed as an obstruction for local deformations of arcs in the core of a thin knot, while keeping fixed its tangent vectors. Statements with a similar flavour can be found in \cite{heiko, gonzalez}.

\begin{corollary} A short arc in the core of a thin knot cannot be deformed outside $cl(\mathcal R)$ while keeping the endpoints and directions fixed. In addition, under these hypotheses, all admissible deformations remain embeddings.
\end{corollary}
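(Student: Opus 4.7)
The plan is to read this corollary as a direct specialization of the machinery developed in Section 3. I would begin by fixing the data: let $\gamma \subset K$ be a short arc in the core $K$ of a thin knot, with endpoints $x,y$ on $\partial B$ for a radius $1/\kappa$ ball $B$ centered on the relevant axis. By Definition \ref{shortlong}, $\gamma([0,s]) \subset cl(\mathcal R)$, and the two antipodes $x,y$ lie on the circle $C$ realizing $\partial B_1 \cap \partial B_2$, so $0<\|x-y\|<2/\kappa$. Any admissible deformation is by hypothesis $\kappa$-constrained, preserves $x,y$ (and in fact the tangent directions at $x,y$, which is a strengthening compatible with Definition \ref{hom_adm} under the $C^1$ metric on $\Sigma(x,y)$), and therefore defines a path in $\Sigma(x,y)$ starting at $\gamma$.

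For the first claim, I would argue by contradiction: suppose an admissible deformation $H_t$ moves $\gamma$ to an arc $\gamma'$ not contained in $cl(\mathcal R)$. Then $\gamma$ and $\gamma'$ would be $\kappa$-constrained homotopic in $\Sigma(x,y)$ while lying on opposite sides of the dichotomy established in Theorem \ref{ktrap}. This contradicts Theorem \ref{ktrap}, which asserts precisely that the family of arcs in $cl(\mathcal R)$ and the family of arcs not in $cl(\mathcal R)$ are in distinct homotopy classes of $\Sigma(x,y)$. Hence every stage of the deformation remains in $cl(\mathcal R)$.

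For the second claim, since the deformation stays inside $cl(\mathcal R)$, Theorem \ref{kappacannotsing} applies at every parameter value: $cl(\mathcal R)$ encloses only embedded arcs, so no intermediate stage can develop a self-intersection. Combining the two, the short arc is trapped inside $cl(\mathcal R)$ under any admissible deformation and all intermediate arcs are embedded.

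The main conceptual point to check — and the only step that is not an immediate citation — is that tying the endpoint tangent directions (as opposed to only the endpoints themselves) does not move us outside the homotopy setting of Section 3. I would address this by noting that the $C^1$ topology on $\Sigma(x,y)$ already records tangent information continuously, so imposing that $H_t$ fix the tangents at $x,y$ merely restricts the homotopy to a closed subset of $\Sigma(x,y)$; the conclusions of Theorem \ref{ktrap} and Theorem \ref{kappacannotsing} remain valid in this subspace because their proofs only use that the intermediate arcs are $\kappa$-constrained and have the prescribed endpoints. This is the one detail I would spell out explicitly, while treating the rest as a genuine corollary.
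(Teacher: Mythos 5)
Your proposal matches the paper's own proof: both arguments cite Theorem \ref{ktrap} to force every stage of the deformation to remain in $cl(\mathcal R)$ and Theorem \ref{kappacannotsing} to guarantee embeddedness of each intermediate arc. The paper's version is a two-line sketch of exactly this, so your additional care about fixing the tangent directions only makes explicit what the paper leaves implicit.
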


\begin{proof} Fix the tangent directions of an arc. The locus of possible deformations for such an arc must be a subset of $cl(\mathcal R)$. Theorem \ref{kappacannotsing} guarantees embeddedness and Theorem \ref{ktrap} guarantees these must be confined in $cl(\mathcal R)$.
\end{proof}

A flat ribbon knot is an immersion of an annulus into the Euclidean plane such that the core of the annulus corresponds to an immersed knot diagram. The ribbonlength problem aims to find the minimal ratio between the length of the core to the ribbon width over all the planar realisations in a knot or link type,
$$\mbox{Rib}(\gamma)= \frac{\mbox{Length}(\gamma)} {\mbox{Width}(\gamma)} .$$  


The next result is used to prove the minimality of a planar stadium curve enclosing two disjoint radius $1/\kappa$ disks (this corresponds to the core of a thin unknot). This result can also be proved using the techniques in \cite{cantarella 1}.

\begin{lemma} \label{ribbon3d}
 Consider the plane minus the interior of two disjoint radius $1/\kappa$ disks. The minimal ribbonlength diagram enclosing both disks is achieved by the unknot with core a stadium curve of ribbonlength $2 (\pi+1)$. 
 \end{lemma}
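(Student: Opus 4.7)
The plan is to recast the ribbonlength minimisation as a shortest enclosing-curve problem for the convex hull of the two disks, and then apply the Steiner offset formula to pass from the inner boundary of the ribbon to its core.

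First, any embedded ribbon realising such a diagram has an inner boundary $\beta$ that is a simple closed $\kappa$-constrained curve lying in $\mathbb{R}^2 \setminus \mathrm{int}(D_1 \cup D_2)$ and enclosing both disks, while the core $\gamma$ is its outward parallel offset by half the ribbon width $W$. By the Steiner offset formula, if $\beta$ is a simple closed curve of length $L_\beta$, then $\gamma$ has length $L_\beta + \pi W$, so
\[
\mathrm{Rib}(\gamma) \;=\; \frac{L_\beta}{W} + \pi.
\]
With the maximal ribbon width $W = 2/\kappa$ (the thickest ribbon compatible with the curvature bound, beyond which the inner edge would develop cusps), the minimisation reduces to finding the shortest $\kappa$-constrained simple closed curve in $\mathbb{R}^2 \setminus \mathrm{int}(D_1 \cup D_2)$ that encloses both disks.

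Second, I would invoke the classical projection lemma: the orthogonal projection $\pi_K \colon \mathbb{R}^2 \to K$ onto a convex set $K$ is $1$-Lipschitz, so any simple closed curve $c$ enclosing $K$ satisfies $\mathrm{length}(c) \geq \mathrm{length}(\partial K)$. Taking $K = \mathrm{conv}(D_1 \cup D_2)$ for two tangent disks of radius $1/\kappa$, this convex hull is precisely the stadium whose two semicircular caps coincide with the outer halves of the disks and whose two straight edges are the common external tangents of length $2/\kappa$. Its perimeter is $L_\beta = 2\pi/\kappa + 4/\kappa$, and its curvature is at most $\kappa$ everywhere (equal to $\kappa$ on the caps and $0$ on the segments), so it is $\kappa$-constrained.

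Substituting yields $\mathrm{Rib} = (2\pi/\kappa + 4/\kappa)/(2/\kappa) + \pi = (\pi + 2) + \pi = 2(\pi + 1)$, attained by the stadium core of semicircle radius $2/\kappa$ and straight-segment length $2/\kappa$. The main obstacle is justifying the opening reduction, namely that the minimising ribbon must be tight, i.e., its inner boundary coincides with $\partial \mathrm{conv}(D_1 \cup D_2)$ rather than being some larger enclosing curve with slack. This is handled by a gradient-descent perturbation argument, as indicated in Figure~\ref{figstadium} (centre): any point of the inner boundary strictly separated from $D_1 \cup D_2$ admits a length-decreasing deformation that respects the $\kappa$-constraint, adapting the techniques of \cite{cantarella 1}.
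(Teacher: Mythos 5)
Your proposal is correct, but it follows a genuinely different route from the paper. The paper's proof is variational: it invokes the existence of minimal ribbon diagrams (Theorem 3.3 of \cite{paperh}), runs the gradient descent of Section 5 of \cite{paperh} on a generic diagram enclosing the two disks, observes that a minimal layout forces the disks to be tangent, and asserts that the descent terminates at the stadium curve of length $4(\pi+1)/\kappa$ and width $2/\kappa$. You instead give a direct lower bound: reduce core length to inner-boundary length via the Steiner offset identity $L_\gamma=L_\beta+\pi W$ (valid here because the $\kappa$-constraint keeps $1-\tfrac{W}{2}\kappa_N\geq 0$, so the first-variation computation is exact), and then bound $L_\beta$ from below by the perimeter of $\mathrm{conv}(D_1\cup D_2)$ using the $1$-Lipschitz nearest-point projection; equality is realised by the stadium, and the arithmetic $(\pi+2)+\pi=2(\pi+1)$ matches the paper's core of cap radius $2/\kappa$ and segment length $2/\kappa$. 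This is essentially the alternative the paper itself flags with ``this result can also be proved using the techniques in \cite{cantarella 1},'' and it buys an explicit, self-contained lower bound where the paper relies on trusting that the descent converges to the stated configuration. Two loose ends, both of which the paper also leaves implicit, deserve a sentence each in your write-up: (i) the reduction to \emph{embedded} ribbons --- a priori the competitors are immersed diagrams with crossings, for which ``inner boundary enclosing the disks'' is not defined, so you must first argue (as the paper does with ``it is not hard to see'') that a minimiser is embedded; and (ii) the tangency of the two disks, which you assume but which is part of the optimisation over layouts (the infimum over disjoint placements is attained only in the tangent limit). Neither is a fatal gap, but both should be stated rather than absorbed into the word ``any embedded ribbon.''
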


\begin{proof} 
Minimal length ribbon diagrams exist by Theorem 3.3 in \cite{paperh}. We apply the gradient descent method in Section 5 in \cite{paperh} to a generic diagram enclosing the two disks, see Fig. \ref{figstadium}. It is not hard to see that such a diagram must be embedded. Since the disks are disjoint, a minimal layout of these must intersect its boundaries at a single point. The descent leads to a minimal ribbon diagram being a stadium curve of length $4(\pi+1)/\kappa$ and width $2/\kappa$, and therefore of ribbon length $2 (\pi+1).$. Note that any perturbation of the core will lead to a length increase, concluding the proof. 
\end{proof}

Observe that the next result proves the existence of a gordian unlink without imposing a condition on the length of, $\gamma$ besides the existence of 2 long arcs.

\begin{theorem}\label{exgords} 
There exists an infinite number of gordian unlinks.
\end{theorem}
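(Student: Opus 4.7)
The plan is to leverage Corollary \ref{nonsplit} as an engine: each configuration consisting of a round circle $\gamma$ in the $xy$-plane and a $\kappa$-constrained unknotted loop $\beta$ threading through the disk bounded by $\gamma$ with a long arc above and a long arc below is already known to be non-separable, i.e., a gordian unlink. To obtain infinitely many, I would build a countable family $\{(\beta_n,\gamma)\}_{n\geq 1}$ of such pairs whose members are pairwise non thin-isotopic, by varying the number of long-arc excursions of $\beta_n$ through the disk.

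Concretely, fix $\gamma$ as a round circle of radius $r_0<1/\kappa$ centred at the origin in the $xy$-plane, and let $D$ be the disk it bounds. For each $n\geq 1$, I would construct a $\kappa$-constrained $C^1$ piecewise $C^2$ embedded unknotted loop $\beta_n$ that crosses $\mathrm{int}(D)$ exactly $2n$ times, alternating from above to below, each crossing realised as a long arc in the sense of Definition \ref{shortlong}. One natural realisation concatenates $n$ copies of the zig-zag loop from Corollary \ref{nonsplit}, arranged sequentially along the vertical axis through $D$, with the concatenation smoothed via Dubins-type arcs of radius $1/\kappa$ to guarantee the curvature bound and $C^1$ regularity. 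Existence of such a building block is secured by the Dubins/Sussmann framework referenced in the introduction.

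Each $(\beta_n,\gamma)$ is gordian by an argument identical to the one in Corollary \ref{nonsplit}: any hypothetical $\kappa$-constrained isotopy separating $\beta_n$ from $\gamma$ must, by continuity, pass through a configuration where at least one long arc of some deformed $\beta_n$ is confined in the open cylinder of radius $1/\kappa$ over $D$, violating Lemma \ref{r1}. For pairwise distinctness, I would observe that $\mathrm{Length}(\beta_n)$ is strictly increasing in $n$ (each additional long arc contributes at least a fixed positive amount, bounded below by $2r_0$ by the diameter statement in Lemma \ref{r1}); since length of each component is preserved by thin isotopy, the pairs $(\beta_n,\gamma)$ belong to pairwise distinct thin isotopy classes.

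The main obstacle I expect is the geometric realisation step: verifying that $\beta_n$ can indeed be assembled as a single $\kappa$-constrained embedded loop whose $2n$ transverse crossings of $D$ are all long arcs, while remaining embedded and of class $C^1$ piecewise $C^2$. The curvature bound forces the loop to turn slowly between consecutive up-and-down excursions, which requires spacing the excursions along $\gamma$ with enough room for Dubins-style transitions; ensuring global embeddedness of the resulting concatenation is the technical point, but it can be handled by taking $r_0$ sufficiently close to $1/\kappa$ and stacking the $n$ zig-zags so that their Dubins connectors remain disjoint. Once this is in place, the remaining verifications reduce to invocations of Corollary \ref{nonsplit} and Lemma \ref{r1}.
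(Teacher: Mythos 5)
Your overall strategy coincides with the paper's in its two essential ingredients: non-separability comes from Lemma \ref{r1} via Corollary \ref{nonsplit}, and the infinitely many examples are distinguished by the length of one component, which is a thin-isotopy invariant. However, the specific mechanism you propose for generating the infinite family --- forcing $\beta_n$ to cross $\mathrm{int}(D)$ through $2n$ long arcs --- runs into a quantitative obstruction that you flag but do not resolve, and that in fact cannot be resolved as stated. A gordian unlink in this paper is a \emph{thin} link: each component has thickness $2\tau\in[1,2)$, so by Theorem \ref{simon} the minimal doubly-critical self-distance satisfies $R_2(\beta_n)=2\tau\geq 1$. Your $2n$ transversal strands all pass through the disk $D$ of radius $r_0<1/\kappa=1$, so the $2n$ intersection points lie in a disk of diameter less than $2$ while being pairwise at distance at least $1$ (adjacent antiparallel strands realise doubly-critical pairs near their closest approach). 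A disk of radius less than $1$ contains at most finitely many (roughly seven) points that are pairwise at distance at least $1$, so $n$ is bounded and the family $\{\beta_n\}_{n\geq 1}$ does not exist within the class of thin links; for large $n$ the thickness of $\beta_n$ degenerates to $0$. Note that you cannot enlarge $D$ to make room, since $r_0<1/\kappa$ is exactly the hypothesis needed for the cylinder in Lemma \ref{r1}, hence for Corollary \ref{nonsplit}.

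The repair is simpler than the construction you attempt, and it is what the paper does: keep exactly \emph{two} crossings (one long arc above and one below the $xy$-plane) and instead increase the \emph{length} of the threading component $\gamma$. The only lower bound imposed on $\gamma$ is the existence of the two long arcs, so its length can be increased without bound (for instance by elongating the excursions far from $D$), without creating any new near self-approaches and hence without losing the thickness constraint. Lemma \ref{r1} applies verbatim to each such $\gamma$, and since thin isotopy preserves the length of each component, distinct lengths give pairwise distinct gordian unlinks. (The paper also replaces your round circle by the minimal stadium curve of Lemma \ref{ribbon3d}, but that choice matters only for the ropelength-critical examples of Theorems \ref{mingords} and \ref{ccccgordian}, not for the mere existence of infinitely many gordian unlinks.)
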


\begin{proof} 
Consider a thin unknot with core $\gamma$ traversing the $xy$-plane orthogonally at $\gamma(t_1),\gamma(t_2)$. Suppose that the portions of $\gamma$ defined above and below the $xy$-plane are both long arcs. Due to the thickness constraint, we have that $||\gamma(t_1)-\gamma(t_2)||\geq2 \tau(\gamma)$. 

Consider the relation $\kappa=2\tau(\gamma)$ between the curvature bound and thickness of $\gamma$. Suppose that $\tau(\gamma)=1/2$ so that the $\min \{R_1(\gamma),\frac{1}{2} R_2(\gamma) \}$ is achieved at $\frac{1}{2} R_2(\gamma)$. We conclude that $R_2(\gamma)=1$ and that $R_1(\gamma)=1/\max \kappa(s)=1/\kappa=1$. We set $\gamma$ to satisfy $||\gamma_1(t_1)-\gamma_1(t_2)||=1$ in the $xy$-plane.

By Lemma \ref{ribbon3d} the tightest core diagram enclosing two radius $1/2$ disks in the $xy$-plane has core a $1$-constrained stadium curve denoted now as $\beta$, see figure \ref{figstadium} right. Consider the thin unknot with core $\beta$. Note that any perturbation of $\beta$ would increase its length. Note also that $\gamma, \beta$ have both $1$-constrained cores and thickness $1$.
 
Let $D$ be the unit radius disk enclosing the radius $1/2$ disks $D_1,D_2$ corresponding to the intersection of the thin unknot with core $\gamma$ and the $xy$-plane. Note in particular that we have $\gamma(t_1),\gamma(t_2) \in int(D)$.

If $\gamma, \beta$ are separable, then there exists a $\kappa$-constrained isotopy $H$ such that $H(0)=\gamma$ and $H(t^*)\cap  D=\emptyset$ for some $t^*\in [0,1]$. Since $\gamma$ has a long arc above and below the $xy$-plane via Lemma \ref{r1} we have that $H(t)\cap\  D\neq\emptyset$ for all $t\in[0,1]$, and therefore such an isotopy cannot exist, concluding by Definition \ref{sep} that $\beta$ and $\gamma$ are the cores of a pair of gordian unlinks. 

Since the only constraint on the length of $\gamma$ is the existence of a long arc below and above the $xy$-plane (which constitutes a lower bound on length) we can successively increase the length of $\gamma$ and apply Lemma \ref{r1} to find an infinite number of gordian unlinks.
\end{proof}

\subsection{Critical ropelength gordian unlinks}

A planar bounded curvature path corresponds to a $C^1$ and piecewise $C^2$ path in $\mathbb R^2$. These paths connect two {\it fixed} elements in the tangent bundle $T\mathbb R^2$ and have curvature bounded by a positive constant $\kappa$.  In 1957 L. E. Dubins \cite{dubins 1}  characterised the minimal length planar bounded curvature paths motivated by questions of A. Markov \cite{markov}. These correspond to paths of type {\sc csc} or  {\sc ccc} where {\sc c} stands for an arc of a circle of radius $1/\kappa$ and {\sc s} a line segment. 

A bounded curvature path connecting fixed $(x,X), (y,Y) \in T{\mathbb R}^3$ starts at $x$ tangent to $X$ ending at $y$ tangent to $Y$ with absolute curvature at most $\kappa>0$ throughout. We consider a result of H. Sussmann in which he characterised the minimal length bounded curvature paths in $\mathbb R^3$ \cite{sussman}. Besides the {\sc csc}-{\sc ccc} characterisation, he found the existence of helicoidal arcs, characterised by the fact that their torsion satisfies a second order ordinary differential equation. We present the following result in its original form for $\kappa=1$.

\begin{theorem} \label{mds}(Markov-Dubins-Sussman). Choose $(x,X), (y,Y) \in T{\mathbb R}^3$. A length minimising bounded curvature path from $(x,X)$ to $(y,Y)$ is either a {\sc ccc} path having its middle component of length greater than $\pi $ or a {\sc csc} path. Some of the circular arcs or line segments may have zero length. Or, a helicoidal arc where torsion $\alpha$ never vanishes and satisfies the following differential equation,
$$\alpha''=\frac{3 \alpha'^2}{2 \alpha}- 2\alpha^3+2 \alpha-\xi \alpha |\alpha|^{\frac{1}{2}}, \qquad \xi \geq0.$$
\end{theorem}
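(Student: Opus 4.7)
The plan is to formulate the problem as a time-optimal control problem and apply the Pontryagin Maximum Principle (PMP). First I would encode a $\kappa$-constrained arc, parametrised by arc length, by the pair $(x(s),X(s))\in \mathbb{R}^3\times S^2$ with dynamics $x'=X$, $X'=\kappa u$, where the control $u$ lies in the closed unit disk of the normal plane $X^\perp$. The endpoints in $T\mathbb{R}^3$ are fixed and the cost is the free final time $L$. Existence of a length-minimising extremal follows from Filippov's theorem, since the velocity set is compact and convex, so weak-$\ast$ compactness on relaxed controls delivers a minimiser which PMP then shows to be non-relaxed.

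Next I would read off the structure of extremals from PMP. With costates $(p,P)$ along $(x,X)$, the adjoint equation gives $p$ constant in $\mathbb{R}^3$ and $P$ an explicit affine combination of $X$, $x'$, and $p$. The maximisation condition picks $u$ parallel to the projection $P_{X^\perp}$ whenever that projection is nonzero, which saturates the curvature bound and produces a circular arc \textbf{C}; when $P_{X^\perp}\equiv 0$ on an interval, the curvature is undetermined and the piece is a straight segment \textbf{S}. Hamiltonian continuity at the switching times and a dimension count against the boundary data restrict concatenations of such pieces to \textbf{CSC} and \textbf{CCC}, and an elementary local shortening applied to a too-short middle \textbf{C} inside a \textbf{CCC} path yields the lower bound $\pi$ on its length.

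For the genuinely three-dimensional extremals the control $u$ rotates continuously inside $X^\perp$ and the trajectory is helicoidal. I would rewrite the Hamiltonian system in the moving Frenet frame $(T,N,B)$ along the curve with torsion $\alpha$ as the unknown; the conservation of the Hamiltonian and of $|p|$ supplies two first integrals, and eliminating the Frenet components of $P$ produces a closed second-order ODE for $\alpha$ which, after careful sign analysis, takes the form $\alpha''=\tfrac{3(\alpha')^2}{2\alpha}-2\alpha^3+2\alpha-\xi\,\alpha|\alpha|^{1/2}$ with $\xi\ge 0$ the conserved parameter; non-vanishing of $\alpha$ on such a piece follows because a zero would collapse the Frenet frame and reduce the piece to a planar extremal already covered by the \textbf{CSC}/\textbf{CCC} cases. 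The hard part will be precisely this derivation --- identifying the correct conserved quantities, tracking signs so that $\xi\alpha|\alpha|^{1/2}$ appears with the stated coefficient, and rigorously excluding hybrid concatenations of helicoidal pieces with \textbf{C} or \textbf{S} arcs not listed in the statement --- and for these subtleties I would follow Sussmann's argument in \cite{sussman} rather than attempt to reconstruct them from scratch.
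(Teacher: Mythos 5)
The paper does not actually prove this statement: it is quoted (in the normalisation $\kappa=1$) as Sussmann's theorem, with \cite{sussman} given as the source, so there is no in-paper argument to compare yours against. That said, your outline is essentially a faithful reconstruction of the route Sussmann himself takes: the state $(x,X)\in\mathbb R^3\times S^2$ with $x'=X$, $X'=\kappa u$ and $u$ in the closed unit disk of $X^\perp$, Filippov's theorem for existence, and the Pontryagin Maximum Principle to split extremals into bang pieces (circular arcs, where the projection of the costate onto $X^\perp$ is nonzero), singular pieces (line segments), and the genuinely non-planar helicoidal arcs governed by the stated ODE for the torsion. Two cautions if you intended this to stand as more than a pointer to the literature. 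First, the step you compress into ``Hamiltonian continuity at the switching times and a dimension count'' is where nearly all of the work lies: one must exclude longer concatenations such as {\sc cccc} and {\sc scs}, and in the three-dimensional case also hybrid concatenations of helicoidal pieces with {\sc c} or {\sc s} arcs; none of this follows from a dimension count, but rather from the switching-function analysis and local shortening arguments that occupy the bulk of \cite{sussman} (and, in the planar case, of Dubins' original paper). Second, abnormal extremals (vanishing cost multiplier) need to be treated separately, since the maximum principle does not rule them out a priori. Because you explicitly defer exactly these points to \cite{sussman}, your proposal ultimately functions the same way the paper does---as a citation---and at that level it is correct and correctly attributed.
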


Computing length minimising planar bounded curvature paths is a delicate issue. For example, in many cases, the length variation between length minimisers of arbitrarily close endpoints or directions is discontinuous. In addition, the symmetry property metrics satisfy is in general violated. That is, in general, the length of the Dubins path from $(x,X)$ to $(y,Y)$ is different from $(y,Y)$ to $(x,X)$. Length minimising bounded curvature paths may not be unique, refer to \cite{paperf} for details about these claims.

\begin{definition} The ropelength $\mbox{Rop}(\gamma)$ of a thin knot $\gamma$ is the ratio of the length of the core to its thickness:
$$\mbox{Rop}(\gamma)= \frac{\mbox{Length}(\gamma)} {2 \tau(\gamma)}$$
\end{definition}

The ropelength problem asks for the minimal ropelength over all the realisations in a knot or link type. The ropelength problem is the 3-dimensional version of the ribbonlength problem. 

In Theorem \ref{exgords}, we constructed a family of gordian unlinks, where the core of one component, denoted $\gamma$, traverses orthogonally through the $xy$-plane, allowing a long arc both above and below the plane without any additional condition on its length. The other component, denoted $\beta$, has its core defined by a stadium curve.


We would like to highlight the following special case where both cores curves are $1$-constrained with thickness $1$. 

\begin{theorem}\label{mingords} There exists a minimal ropelength gordian unlink.
\end{theorem}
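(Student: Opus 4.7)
The plan is to refine the construction of Theorem \ref{exgords} so that each component of the gordian unlink is individually ropelength-minimizing, and then invoke Theorem \ref{exgords} again to recover the gordian property. Throughout I fix $\kappa=1$ and thickness $1$, and I retain the basic geometry of the earlier construction: $\gamma$ crosses the $xy$-plane orthogonally at two points $\gamma(t_1),\gamma(t_2)$ at distance $1$ apart, while $\beta$ is a planar core enclosing the two radius-$1/2$ cross-sectional disks $D_1, D_2$ of the tube around $\gamma$.

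For $\beta$, I would take the planar stadium curve of Lemma \ref{ribbon3d}, which is the ropelength minimizer among $1$-constrained planar diagrams enclosing the two disks; no perturbation of $\beta$ through $1$-constrained unknots of thickness $1$ in the $xy$-plane can shorten it. For $\gamma$, I would apply Theorem \ref{mds} (Markov--Dubins--Sussman) to select the shortest $1$-constrained arc from $(\gamma(t_1),+e_z)$ to $(\gamma(t_2),-e_z)$: a length minimizer of \textsc{csc}, \textsc{ccc}, or helicoidal type is guaranteed to exist. Taking this as the upper arc of $\gamma$ and its reflection across the $xy$-plane (with orientation reversed) as the lower arc produces a closed $C^1$ loop with matching vertical tangents at $\gamma(t_1),\gamma(t_2)$; by symmetry, this is the shortest $\gamma$ in the class of admissible loops.

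Two conditions then remain to be checked: (i) the length-minimizing $\gamma$ retains the long arc condition, and (ii) the full loop $\gamma$ has thickness exactly $1/2$, i.e.\ $R_2(\gamma)=1$ is realized at the chord $(\gamma(t_1),\gamma(t_2))$ and nowhere shorter. For (i), Lemma \ref{r1}(2) forces any $1$-constrained arc between opposing vertical tangents at distance $1$ to have diameter at least $2$, which in turn places a point well above the sphere $S$ of that lemma (whose top sits at height $1-\sqrt{3}/2$). Once (i) and (ii) are in place, the argument of Theorem \ref{exgords} shows that $(\beta,\gamma)$ is a gordian unlink, and componentwise minimality gives the minimality of the total ropelength. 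The main obstacle is (ii): ruling out shorter double-critical chords elsewhere along the Markov--Dubins--Sussman minimizer. A clean route is to take $\gamma$ to be the planar symmetric \textsc{ccc} minimizer in the $xz$-plane glued to its reflection, so that all doubly critical pairs can be enumerated by inspection and the thickness condition verified directly; any remaining degeneracies can be removed by a small parity or symmetry argument within the same Dubins family.
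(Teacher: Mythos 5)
Your proposal follows essentially the same route as the paper: both take $\beta$ to be the stadium curve of Lemma \ref{ribbon3d} and build $\gamma$ by applying the Markov--Dubins--Sussmann theorem to the coplanar endpoint data $(x,+e_z)$, $(y,-e_z)$, obtaining a planar \textsc{ccc} minimizer concatenated with its mirror image across the $xy$-plane, then invoking Corollary \ref{nonsplit} (equivalently, the long-arc argument of Theorem \ref{exgords}) for non-separability and componentwise minimality for the ropelength claim. If anything, you are more careful than the paper on the verification that $R_2(\gamma)=1$ is realized at the chord $\gamma(t_1)\gamma(t_2)$ and on checking the long-arc condition, both of which the paper simply asserts.
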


\begin{proof} 
 We first search for the minimal length $1$-constrained path starting at $x=(0,0,0)$ tangent to $X=(0,0,1)$ finishing at $y=(-1,0,0)$ tangent to $Y=(0,0,-1)$ based at $y$. Since the initial and final points and directions are coplanar, according to Theorem \ref{mds}, the solution has torsion zero and must be {\sc csc} or {\sc ccc}. We search amongst the six possible solutions given by the {\sc csc}-{\sc ccc} types in the $xz$-plane. By considering travel orientation (left-right) we conclude that the solution is a {\sc ccc} path of type left-right-left. Now, consider the solution path from $(y,Y)$ to $(x,X)$. This solution is also {\sc ccc} path. And more interestingly, the pair of {\sc ccc} solutions are mirror symmetric with respect to the $x$-axis in the $xz$-plane, see figure \ref{figccccfam}. 

Let $\gamma$ be the concatenation of the two {\sc ccc} paths. Clearly, $\gamma$ has a long arc above and below the $xy$-plane. Therefore, by Corollary \ref{nonsplit} $\gamma$ and the stadium unknot constructed in Theorem \ref{exgords} are non-separable unknots, and therefore gordian unknots. 

Since both {\sc ccc} paths are of minimal length under the given constraints. And, due to symmetry, $\gamma$ is also of minimal length under these constraints. We use the methods in \cite{paperf} to compute the following values. Since the length of each {\sc ccc} path is approximately $6,0325$ we have that $Length(\gamma)\approx 2\times 6,0325$ and $Length(\beta)\approx 8,2831$ both of thickness $1$ we conclude that:
$$\mbox{Rop}(\gamma \cup \beta)\approx 20.3481.$$

\end{proof}

%


{ \begin{figure} 
 \begin{center}
\includegraphics[width=1\textwidth,angle=0]{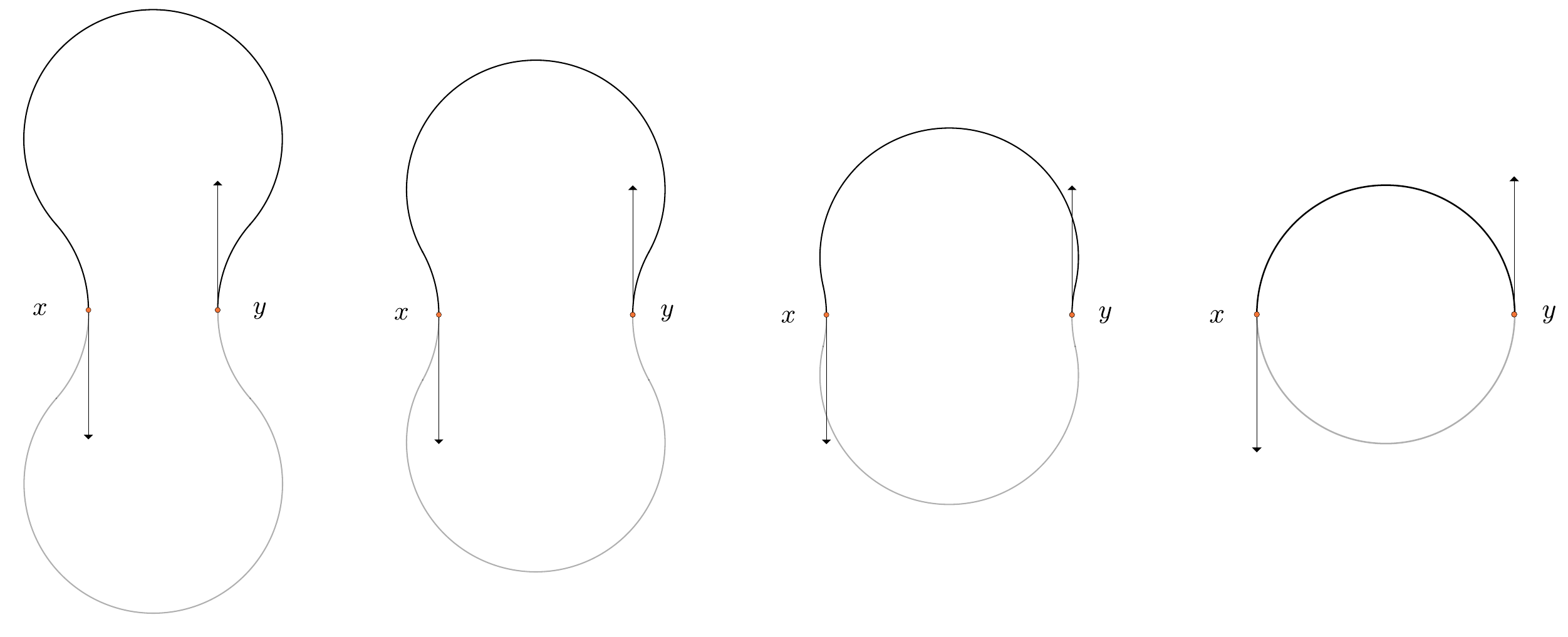}
\end{center}
\caption {Some solutions for the 2-dimensional Dubins problem in the $xz$-plane. In grey, we depict the {\sc ccc} solutions from $(x,X)$ to $(y,Y)$ and in black, solutions from $(y,Y)$ to $(x,X)$. When concatenated, these become elements of a family of {\sc cccc} unknots. The last picture is a radius 1 circle, and it corresponds to the limit case when of thickness $2\tau=2$. Note these are $1$-constrained curves.}
\label{figccccfam}
\end{figure}}

Next, we generalize the construction of the gordian unlink in Theorem \ref{mingords} and prove the existence of a family of gordian unlinks with thickness parametrized in the interval $[1, 2)$. In this case, both unknots have piecewise constant curvature and different thicknesses.


\begin{theorem}\label{ccccgordian} 
There exists a one-parameter family of minimal ropelength gordian unlinks. 
\end{theorem}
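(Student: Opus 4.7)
The plan is to reuse the construction of Theorem \ref{mingords} with the separation between the two perpendicular punctures promoted to a continuous parameter. Fix $\kappa=1$ throughout. For each $t\in[1,2)$, I would take the boundary data $x=(0,0,0)$ tangent to $X=(0,0,1)$ and $y=(-t,0,0)$ tangent to $Y=(0,0,-1)$; all of this lies in the $xz$-plane.

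First I would apply Theorem \ref{mds} to $(x,X)$ and $(y,Y)$. Because the boundary data is coplanar, the helicoidal branch of the Markov--Dubins--Sussman classification is excluded and the length-minimising $1$-constrained path must be of type {\sc csc} or {\sc ccc} lying in the $xz$-plane. A direct enumeration of the six such types, carried out as in the proof of Theorem \ref{mingords}, shows that for every $t\in[1,2)$ the minimiser $\alpha_t$ is a {\sc ccc} path of type left--right--left; at the boundary $t=2$ the three circles degenerate into a single round circle of radius $1$, which explains the half-open interval and matches the final panel of figure \ref{figccccfam}.

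Next, I would define $\gamma_t$ to be the closed curve obtained by concatenating $\alpha_t$ with the reversal of its mirror image across the $xy$-plane. The tangent vectors at $x$ and $y$ match by construction, so $\gamma_t$ is a $C^1$ closed $1$-constrained unknot with piecewise constant curvature equal to $1$, perpendicular punctures of the $xy$-plane separated by distance $t$, and a long arc in the sense of Definition \ref{shortlong} both above and below the $xy$-plane; its thickness is $t$ by the same double-critical computation as in Theorem \ref{mingords}. For the companion unknot $\beta_t$, I would invoke Lemma \ref{ribbon3d}: the tube of $\gamma_t$ intersects the $xy$-plane in two disks of radius $t/2$, and the gradient descent of that lemma applied to this pair produces a minimal-ribbonlength $1$-constrained stadium $\beta_t$ whose core has piecewise constant curvature.

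Finally, non-separability of $\gamma_t$ and $\beta_t$ follows verbatim from the argument of Theorem \ref{exgords}: since $t\in[1,2)\subset(0,2/\kappa)$, Lemma \ref{r1} forbids any candidate $1$-constrained isotopy from pulling $\gamma_t$ through the disk enclosed by $\beta_t$ in the $xy$-plane. Ropelength minimality is then componentwise from Theorem \ref{mds} (for $\gamma_t$) and Lemma \ref{ribbon3d} (for $\beta_t$). The main obstacle is the uniform case analysis of the Dubins minimiser across the whole interval $t\in[1,2)$: in principle the minimiser could switch from left--right--left to a {\sc csc} type at some intermediate $t$, and ruling this out requires comparing the closed-form lengths as functions of $t$. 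A subsidiary technicality is the extension of Lemma \ref{ribbon3d} from radius-$1/\kappa$ enclosed disks to radius-$t/2$ disks; the gradient-descent argument applies verbatim, but the stadium dimensions vary continuously with $t$ and must be tracked explicitly to confirm that $\beta_t$ remains embedded and $1$-constrained throughout the family.
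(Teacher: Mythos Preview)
Your proposal is correct and follows essentially the same route as the paper: parametrise by the puncture separation (the paper writes $\tau=\|x-y_\tau\|/2$ where your $t=2\tau$), build $\gamma_t$ as the concatenation of two mirror-symmetric planar {\sc ccc} Dubins arcs via Theorem~\ref{mds}, take $\beta_t$ to be the stadium curve coming from Lemma~\ref{ribbon3d}, and invoke Lemma~\ref{r1}/Corollary~\ref{nonsplit} for non-separability using $\|x-y_\tau\|<2$. The two obstacles you flag---uniformity of the {\sc ccc} type over the whole parameter interval and the variable-radius extension of Lemma~\ref{ribbon3d}---are dealt with in the paper only by assertion, so your explicit attention to them is warranted rather than excessive.
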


\begin{proof}  We construct a family of gordian unlinks whose elements are two component thin unknots $\{\gamma_\tau, \beta_\tau\}$. The cores $\gamma_\tau$ are $1$-constrained curves with thickness $2\tau$, and they traverse orthogonally the $xy$-plane while admitting a long arc above and below the $xy$-plane. The cores $\beta_\tau$ are also $1$-constrained stadium curves $\tau \in (1/2,1)$. 

We look for minimal length $1$-constrained paths connecting $(x,X_\tau), (y_\tau,Y_\tau)\in T\mathbb R^3$ where $X_\tau=(0,0,1)$ is based at $x=(0,0,0)$ and $Y_\tau=(0,0,-1)$ is based at $y_\tau\in (-\delta,0,0)$ for $\delta \in (1,2)$. We set $\tau=\frac{||x-y_\tau||}{2}$. We also search for Dubins minimal length path from $(y_\tau,Y_\tau)$ to $(x,X_\tau)$. 

As in Theorem \ref{mingords} the initial and final points and directions are coplanar, so both solutions have torsion zero and must be of {\sc ccc} type. Note that each $\gamma_\tau$ is a loop of type {\sc cccc} of minimal length. 

 Similar to Theorem \ref{mingords}, but for each, $\gamma_\tau$ we consider the minimal ribbon diagram $\beta_\tau$ (in the $xy$-plane) enclosing two disks of radius $\tau=\kappa/2$. Note also that $x=(0,0,0)$ and $y_\tau$ are the centres of such circles. In addition, recall the cores $\beta_\tau$ are $2\tau$-constrained stadium curves of length $4\tau(\pi+1)$ satisfying $\kappa=2\tau$, and ropelength $2(\pi+1)$. 

We apply Corollary \ref{nonsplit} accordingly to each element in the family by noting that the points $x,y_\tau$ are always in the interior of the base of a cylinder of radius $1$ based on the $xy$-plane since $||x-y_\tau||<2$ to conclude that these unknots are not separable, and therefore gordian. We compute the length of each unlink similarly to Theorem \ref{mingords}. 

\end{proof}


{ \begin{figure} 
 \begin{center}
\includegraphics[width=1\textwidth,angle=0]{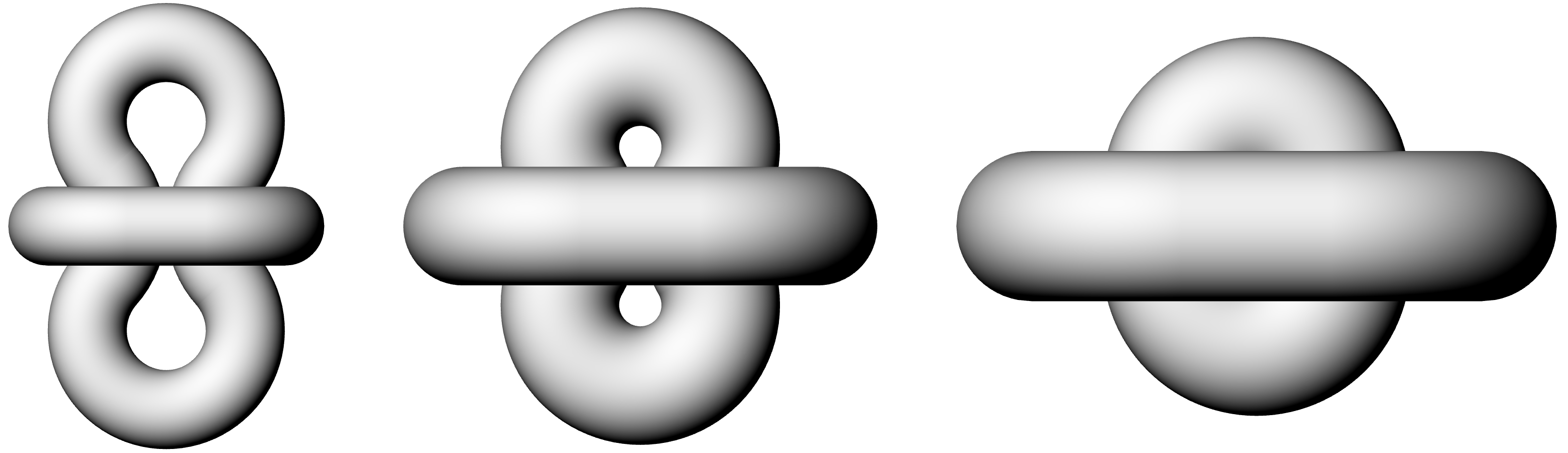}
\end{center}
\caption {Examples gordian unlinks in spaces of thin knots of thickness $1.0, 1.5, 1.9$ respectively and curvature bounded above by 1. These cannot be separated by an isotopy preserving the prescribed constraints.}
\label{figfig5}
\end{figure}}

{ \begin{figure} 
 \begin{center}
\includegraphics[width=.9\textwidth,angle=0]{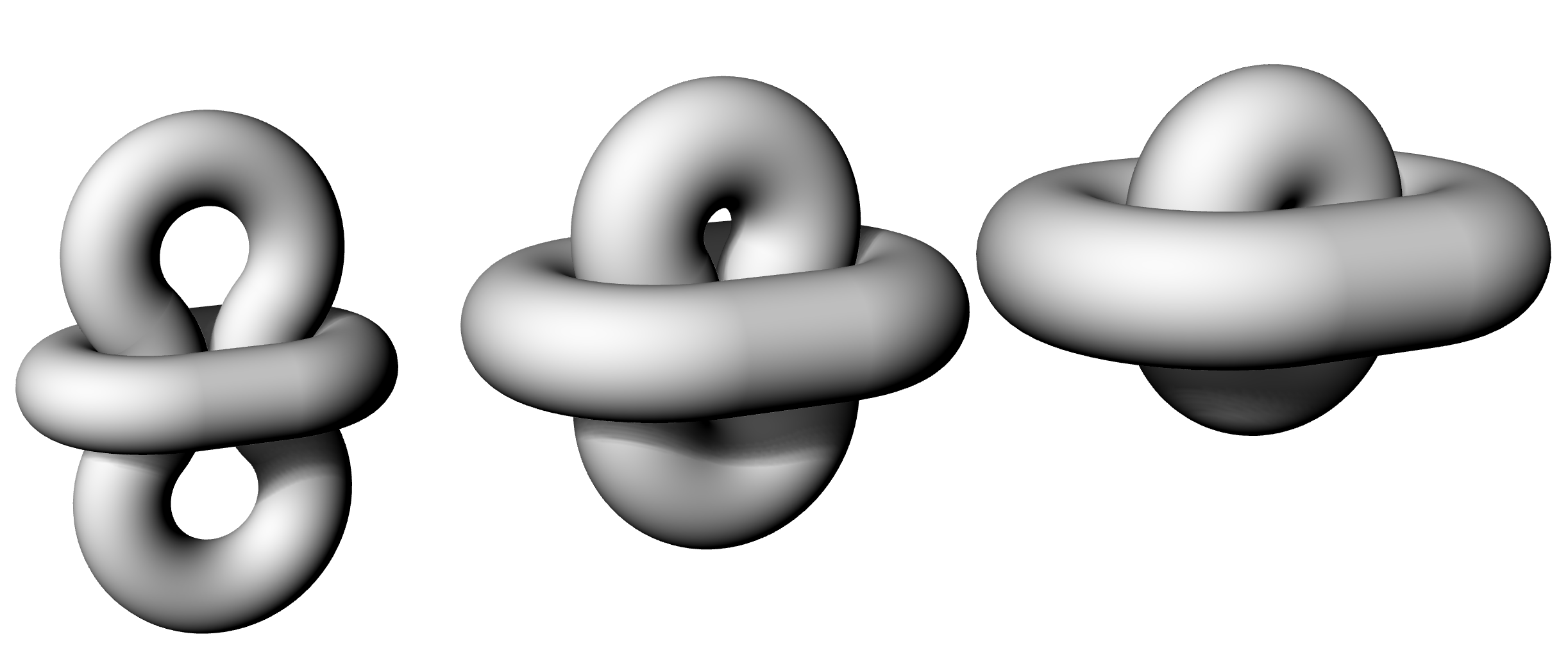}
\end{center}
\caption {
Figure \ref{figfig5} shows pairs of unknots forming a gordian unlink of piecewise constant curvature. The thickness varies across the unlinks in this family.}
\label{figfig6}
\end{figure}}

%
%
%

{\bf Acknowledgments.}
I thank Alejandro Toro and Francisco Tovar-Lopez for their work on the graphics. I am also grateful to J. Hyam Rubinstein for his valuable suggestions and comments.

 




 \end{document}